\newcommand{\tsf}{\textsf}
\newcommand{\mf}{\mathfrak}
\newcommand{\mb}{\mathbb}
\newcommand{\mc}{\mathcal}
\newcommand{\tx}{\text}
\newcommand{\mrm}{\mathrm}
\newcommand{\ds}{\displaystyle}
\newcommand{\inv}{\mathrm{inv}}
\newcommand{\des}{\mathrm{des}}
\newcommand{\sqbinom}{\genfrac{[}{]}{0pt}{}}
\newcommand{\mbf}{\mathbf}
\newcommand{\ol}{\overline}
\newcommand{\modulo}[1]{\mathrm{\enspace(mod\enspace#1)}}
\newcommand{\darga}{\mathrm{darga}}
\newcommand{\wt}{\mathrm{wt}}
\theoremstyle{plain}
\newtheorem{theorem}{Theorem}[section]
\newtheorem{proposition}[theorem]{Proposition}
\newtheorem{observation}[theorem]{Observation}
\newtheorem{conjecture}[theorem]{Conjecture}
\theoremstyle{definition}
\newtheorem{example}[theorem]{Example}
\newtheorem{question}[theorem]{Question}
\title{A study of unimodality of some combinatorial sequences and polynomials}
\author{Arjun Pawar\footnote{Dhirubhai Ambani International School, Mumbai, India.  Email: \texttt{arjunpawar2003@gmail.com}}}
\date{}
\begin{document}
	
	\maketitle
	
	\begin{abstract}
		In this article, we present a short, non-exhaustive study of an important and well-known property of combinatorial sequences -- unimodality.  We shall have a look at a sample of classical results on unimodality and related properties, and then proceed to understand the unimodality of the Gaussian polynomial in more detail.  We will look at an outline of O'Hara's proof of the unimodality of the Gaussian polynomial.  In order to grasp the challenge of the problem of obtaining an \emph{injective proof} of the unimodality of the Gaussian polynomial (which is still an open question), we make several attempts and understand where these attempts fail.
	\end{abstract}
	
	\tableofcontents
	\newpage
	
	\section{Introduction}
	
	Enumerative Combinatorics is the study of features and properties, related to cardinality, of families of finite objects which have an inherent discrete nature.  This study could be by means of direct investigation into the families, in the case where they have an obvious structure, like posets, species, etc., or by more indirect means via an encoding of such objects into more combinatorial forms, like generating functions, embeddings into lattices, etc.
	
	We are interested in a very specific enumerative combinatorial property -- unimodality, that is relevant to any families of objects whose cardinalities have a \emph{sequential} nature and can be understood well via embedding into a structured universe like a ranked poset, lattice, etc.  A finite sequence of nonnegative integers is said to be unimodal if it has only one peak, that is, the sequence \emph{rises once and falls once}.  Although seemingly a very simple notion, unimodality has connections with a plethora of other combinatorial properties like log-concavity, \(\gamma\)-nonnegativity, etc. which find applications in diverse areas like simplicial geometry, geometry of polytopes, and more.
	
	In this short article, we attempt to have a quick introduction to unimodality and related notions, and look at some classical results on the same.
	
	\paragraph{Outline of the article.}  In Section~\ref{sec:prelims}, we cover some preliminary ground about poset theory and introduce ourselves to a classical result called Sperner's theorem.  In Section~\ref{sec:unimodality}, we acquaint outselves with unimodality and some related combinatorial properties for combinatorial sequences, along with the very natural connection with similar properties for polynomials.  In Section~\ref{sec:unimodality-more}, we have a look at a very simple test for unimodality and a promising technique for proofs of unimodality.  In Section~\ref{sec:Gaussian}, we conclude by discussing the unimodality of the Gaussian polynomial in detail.
	
	\section{Preliminaries from poset theory}\label{sec:prelims}
	
	In this section, we will have a look at some essential basic facts from poset theory and a few classical results.
	
	\subsection{Basic definitions and examples}
	
	Consider a relation \(\le\) on a (nonempty, usually finite) set \(P\).  We say \((P,\le)\) is a \tsf{partially ordered set} or \tsf{poset} if
	\begin{enumerate}[(a)]
		\item  \(x\le x\), for all \(x\in P\).
		\item  If \(x,y\in P\) such that \(x\le y\) and \(y\le x\), then \(x=y\).
		\item  If \(x\le y\) and \(y\le z\), for \(x,y,z\in P\), then \(x\le z\).
	\end{enumerate}
	
	A subset \(C\subseteq P\) is called a \tsf{chain} if for any \(x,y\in C\), \(x\) and \(y\) are \tsf{comparable}, that is, either \(x\le y\) or \(y\le x\).  If \(P\) itself is a chain, then we say \(P\) is a \tsf{totally ordered set} or \tsf{toset}.
	
	For any subset \(A\subseteq P\), the pair \((A,\le)\), where \(\le\) is defined on \(P\), is also a poset.  In particular, \(C\subseteq P\) is a chain if and only if \((C,\le)\) is a toset.  Henceforth, we shall mean the same by the terminologies \emph{chain} and \emph{toset}.
	
	A subset \(A\subseteq P\) is called an \tsf{antichain} if for any \(x,y\in A\), \(x\) and \(y\) are not comparable, that is, \(x\not\le y\) and \(y\not\le x\).  Note that if \(A\subseteq P\) is not a chain, then \(A\) need not be an antichain.
	
	\begin{example}
		Let \(\mathbb{N}=\{0,1,2,\ldots\}\).  We define two partial orders on \(\mb{N}\).
		
		Define \(\le_1\) on \(\mb{N}\) as
		\[
		x\le_1 y\quad\tx{if}\quad x\le y\tx{ in the usual sense},\quad\tx{for all }x,y\in\mb{N}.
		\]
		Define \(\le_2\) on \(\mb{N}\) as
		\[
		x\le_2 y\quad\tx{if}\quad x\tx{ divides }y\tx{ in the usual sense},\quad\tx{for all }x,y\in\mb{N}.
		\]
		Then \(\le_1\) is a total order on \(\mb{N}\), while \(\le_2\) is NOT a total order on \(\mb{N}\).  We note that \(x\le_2 y\) if and only if \(x\) is a factor of \(y\), that is, every prime number that divides \(x\) must divide \(y\), with atleast the same multiplicity.
		
		We note that \(\mb{N}\) is a toset under \(\le_1\), that is, \(\mb{N}\) is a chain w.r.t \(\le_1\).  So \(\mb{N}\) does not have any subset as an antichain.  But there exist antichains in \(\mb{N}\) under \(\le_2\).  For example, any subset of primes is an antichain.
	\end{example}
	
	\begin{example}
		Let \([n]=\{1,\ldots,n\}\), for any \(n\in\mb{Z}^+\), and let \(2^{[n]}\) be the set of all subsets of \([n]\).  Note that \(\subseteq\) is a partial order on \(2^{[n]}\), that is, \(2^{[n]}\) is a partial order under inclusion.  For any \(k\in\mb{Z}^+\), we define \(\ds\binom{[n]}{k}\) to be the set of all subsets of \([n]\) with size equal to \(k\).  Then trivially, we have \(\bigg|\ds\binom{[n]}{k}\bigg|=\ds\binom{n}{k}\).
		
		The set \(\{\{1\},\{1,2\},\ldots,\{1,\ldots,n\}\}\) is a chain.  More generally, for any \(k\in[n]\), the set \(\{[1],[2],\ldots,[k]\}\) is a chain.  We now note that any collection of mutually disjoint subsets of \([n]\) is an antichain.  Another example of antichain is \(\{\{1,2\},\{1,3\},\{1,4\},\ldots,\{1,n\}\}\), for \(n\ge 2\).  We also note that \(\ds\binom{[n]}{k}\) is an antichain, for all \(k\in[n-1]\).
	\end{example}
	
	\begin{example}
		Let \(\mf{S}_n\) be the set of all permutations on \([n]\).  A \tsf{permutation} on \([n]\) is a bijection \(\sigma:[n]\to[n]\).  We can \emph{write} \(\sigma\in\mf{S}_n\) as
		\[
		\sigma=\begin{pmatrix}1&2&\cdots&n\\\sigma(1)&\sigma(2)&\cdots&\sigma(n)\end{pmatrix}.
		\]
		For example, if \(n=4\) and \(\sigma(1)=2,\sigma(2)=3,\sigma(3)=1,\sigma(4)=4\), we write
		\[
		\sigma=\begin{pmatrix}1&2&3&4\\2&3&1&4\end{pmatrix}.
		\]
		We can also write \(\sigma=\sigma_1\sigma_2\cdots\sigma_n\) (one line notation), where \(\sigma_i=\sigma(i)\), for all \(i\in[n]\).  So for the above example, we have \(\sigma=2,3,1,4\).
		
		We consider the partial order \(\le\) on \(\mf{S}_n\) defined as follows.  For \(\sigma,\pi\in\mf{S}_n\), define \(\sigma<\pi\) if there exists \(i\in[n-1]\) such that \(\pi_i>\pi_{i+1}\) and
		\[
		\sigma_1\cdots\sigma_{i-1}\sigma_i\sigma_{i+1}\sigma_{i+2}\cdots\sigma_n=\pi_1\cdots\pi_{i-1}\pi_{i+1}\pi_i\pi_{i+2}\cdots\pi_n.
		\]
		Now define for any \(\sigma,\pi\in\mf{S}_n\), \(\sigma\le\pi\) if \(\sigma=\pi\) or if there exist \(\tau_1,\ldots,\tau_k\in\mf{S}_n\) such that
		\[
		\sigma<\tau_1<\cdots<\tau_k<\pi.
		\]
		
		So if \(\sigma=2314\), then \(\sigma<\pi\), for \(\pi=2341\).  Further, if \(\alpha=2134\), then \(\alpha<\sigma\).
		
		This poset is called \tsf{inversion poset} and this partial order is called the \tsf{weak Bruhat order}.
	\end{example}
	
	\begin{example}
		A \tsf{(set) partition} of \([n]\) is a family \(\{A_1,\ldots,A_m\}\subseteq 2^{[n]}\) of nonempty subsets such that \(\bigcup_{j\in[m]}A_j=[n]\) and \(A_i\cap A_j=\emptyset\), for all \(i\ne j\).  Let \(\mf{P}_n\) be the collection of all partitions of \([n]\).  We define a partial order \(\le\) on \(\mf{P}_n\).  For any two families \(\mc{A},\mc{B}\in\mf{P}_n\), we define
		\[
		\mc{A}\le\mc{B}\quad\tx{if for every }A\in\mc{A},\tx{ there exists }B\in\mc{B}\tx{ such that }A\subseteq B.
		\]
		In other words, \(\mc{A}\le\mc{B}\) if \(\mc{A}\) is a \tsf{refinement} of \(\mc{B}\).
		
		If \(n=4\), then
		\[
		\{1,2,3,4\}\le\{12,34\}\le\{1234\}.
		\]
	\end{example}
	
	An immediate interesting question in this context is as follows.
	\begin{question}\label{ques:antichain}
		Given a finite poset \(P\), what is the largest size of any antichain in \(P\) and which are the antichains in \(P\) of the largest size?
	\end{question}
	
	\subsection{Sperner's theorem and related questions}
	
	The following is a classical theorem, which answers Question~\ref{ques:antichain} for the poset \(2^{[n]}\).
	
	\begin{theorem}[Sperner's theorem~\cite{sperner1928satz}]\label{thm:sperner}
		For any antichain \(\mc{A}\subseteq2^{[n]}\),
		\[
		|\mc{A}|\le\binom{n}{\lceil n/2\rceil}.
		\]
		Further, the only antichains of maximum size in \(2^{[n]}\) are \(\binom{[n]}{\lfloor n/2\rfloor}\) and \(\binom{[n]}{\lceil n/2\rceil}\).
	\end{theorem}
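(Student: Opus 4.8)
The plan is to prove the inequality by Lubell's chain-counting argument, which in fact yields the stronger Lubell--Yamamoto--Meshalkin (LYM) inequality, and then to extract the uniqueness statement from a careful analysis of the equality case.

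First I would set up the counting framework. Call a chain $\emptyset=C_0\subset C_1\subset\cdots\subset C_n=[n]$ with $|C_i|=i$ a \emph{maximal chain}; each such chain is determined by the order in which the $n$ elements of $[n]$ are adjoined one at a time, so there are exactly $n!$ maximal chains in $2^{[n]}$. For a fixed set $A$ with $|A|=k$, the number of maximal chains passing through $A$ is $k!\,(n-k)!$, since we may order the $k$ elements of $A$ in $k!$ ways to build the portion of the chain below $A$, and order the remaining $n-k$ elements in $(n-k)!$ ways to build the portion above.

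Next I would exploit the antichain hypothesis. Since $\mc{A}$ is an antichain, no maximal chain can contain two distinct members of $\mc{A}$, as any two members of a common chain are comparable. Hence summing the number of chains through each $A\in\mc{A}$ counts distinct chains, giving $\sum_{A\in\mc{A}}|A|!\,(n-|A|)!\le n!$. Dividing through by $n!$ produces the LYM inequality $\sum_{A\in\mc{A}}\binom{n}{|A|}^{-1}\le 1$. Using the standard fact that $\binom{n}{k}$ attains its maximum at $k=\lfloor n/2\rfloor$ (equivalently $k=\lceil n/2\rceil$), each summand is at least $\binom{n}{\lceil n/2\rceil}^{-1}$, so $|\mc{A}|\cdot\binom{n}{\lceil n/2\rceil}^{-1}\le 1$, which is precisely the asserted bound.

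Finally, for the uniqueness statement I would analyze the equality case, writing $\mc{A}_k=\mc{A}\cap\binom{[n]}{k}$. Equality forces $\binom{n}{|A|}=\binom{n}{\lceil n/2\rceil}$ for every $A\in\mc{A}$, which pins the sizes of members of $\mc{A}$ to the set $\{\lfloor n/2\rfloor,\lceil n/2\rceil\}$. When $n$ is even this is the single value $n/2$, and then $\mc{A}\subseteq\binom{[n]}{n/2}$ together with $|\mc{A}|=\binom{n}{n/2}=\big|\binom{[n]}{n/2}\big|$ forces $\mc{A}=\binom{[n]}{n/2}$, finishing this case at once. The main obstacle is the odd case, where a priori $\mc{A}$ could mix sets of size $(n-1)/2$ and $(n+1)/2$. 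To rule this out I would invoke the normalized matching (local LYM) property of the Boolean lattice: the collection of $(n+1)/2$-sets containing some member of $\mc{A}_{(n-1)/2}$ has size at least $|\mc{A}_{(n-1)/2}|$ and, by the antichain condition, must be disjoint from $\mc{A}_{(n+1)/2}$; combining this with the maximality of $|\mc{A}|$ and with the fact that the bipartite containment graph between the two middle levels is connected and biregular forces $\mc{A}$ to be concentrated entirely on one level, hence equal to $\binom{[n]}{\lfloor n/2\rfloor}$ or $\binom{[n]}{\lceil n/2\rceil}$. Making this connectivity-and-equality argument fully rigorous is the delicate part I expect to require the most care.
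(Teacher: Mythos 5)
Your proof of the upper bound is exactly the paper's proof: Lubell's chain-counting argument, the LYM inequality \(\sum_{A\in\mc{A}}\binom{n}{|A|}^{-1}\le1\), and then bounding each summand by the reciprocal of the central binomial coefficient. Where you genuinely depart from the paper is the uniqueness statement: the paper does not prove it at all (it explicitly defers to Anderson's book), whereas you attempt an equality-case analysis. Your outline is sound: equality forces \(|A|\in\{\lfloor n/2\rfloor,\lceil n/2\rceil\}\) for every \(A\in\mc{A}\), the even case follows by cardinality alone, and in the odd case the defect form of normalized matching does close the argument. The step you flag as delicate is completable along the lines you indicate: between the two middle levels the containment graph is \((n+1)/2\)-regular on both sides, so counting edges out of \(\mc{A}_{(n-1)/2}\) gives \(|\partial^{+}\mc{A}_{(n-1)/2}|\ge|\mc{A}_{(n-1)/2}|\), with equality only if every edge incident to \(\partial^{+}\mc{A}_{(n-1)/2}\) returns to \(\mc{A}_{(n-1)/2}\), i.e. only if \(\mc{A}_{(n-1)/2}\cup\partial^{+}\mc{A}_{(n-1)/2}\) is a union of connected components; since the middle-levels graph is connected (any two \((n-1)/2\)-sets are joined by a sequence of single-element swaps, each realized through a common superset), a nonempty proper \(\mc{A}_{(n-1)/2}\) forces strict inequality, and combined with the disjointness \(\partial^{+}\mc{A}_{(n-1)/2}\cap\mc{A}_{(n+1)/2}=\emptyset\) this contradicts \(|\mc{A}|=\binom{n}{\lceil n/2\rceil}\). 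A slicker finish you may prefer: maximality also forces equality in the chain count, so every maximal chain meets \(\mc{A}\) exactly once; then for \(A\in\mc{A}\) of size \((n-1)/2\) and \(A'=(A\setminus\{x\})\cup\{y\}\), any maximal chain through \(A'\) and \(B=A\cup\{y\}\) must meet \(\mc{A}\) in \(A'\) or \(B\), and \(B\supset A\) rules out \(B\), so \(A'\in\mc{A}\); iterating swaps shows \(\mc{A}\) is an entire middle level. Either way, your proposal proves strictly more than the paper's own proof does.
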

	Let us have a look at the proof of Theorem~\ref{thm:sperner} by Lubell~\cite{lubell1966short}.  The proof makes use of a famous inequality in combinatorics, namely, the LYM inequality (see~\cite[Section 1.2]{Anderson:1999918}).  We will prove the first part of the theorem, that is, the upper bound on the size of the antichain; for the second part of the theorem, see~\cite[Section 1.2]{Anderson:1999918}.
	\begin{proof}[Proof of the upper bound in Sperner's theorem]
		We first note that every maximal chain in \(2^{[n]}\) is of the form
		\[
		\{\{a_1\},\{a_1,a_2\},\ldots,\{a_1,a_2,\ldots,a_n\}\},
		\]
		for some distinct \(a_1,\ldots,a_n\) (and so \(\{a_1,\ldots,a_n\}=[n]\)).  It is now immediate that this antichain is completely determined by the permutation \(\pi=a_1a_2\cdots a_n\in\mf{S}_n\).  Thus maximal chains in \(2^{[n]}\) are in one-to-one correspondence with permutations on \([n]\).  This means the number of distinct maximal chains in \(2^{[n]}\) is \(n!\).
		
		Now let \(\mc{A}\) be any antichain in \(2^{[n]}\).  Fix any \(A\in\mc{A}\).  By appealing to the identification of maximal chains with permutations, it follows that the number of chains with the minimal element \(A\) and maximal element \([n]\) is equal to \((n-|A|)!\).  Similarly the number of chains with some singleton set as a minimal element and \(A\) as the maximal element is equal to \(|A|!\).  This means \(A\) is an element in exactly \(|A|!(n-|A|)!\) maximal chains.  Further note that distinct elements in \(\mc{A}\) cannot be elements of the same chain, since \(\mc{A}\) is an antichain.  Thus we get the inequality
		\[
		\sum_{A\in\mc{A}}|A|!(n-|A|)!\le n!.
		\]
		Rewriting the above inequality, we get
		\[
		\sum_{A\in\mc{A}}\frac{1}{\binom{n}{|A|}}\le1,\quad\tx{that is,}\quad\sum_{k=0}^n\frac{p_k(\mc{A})}{\binom{n}{k}}\le1,
		\]
		where we have defined \(p_k(\mc{A})\coloneqq|\{A\in\mc{A}:|A|=k\}|,\,0\le k\le n\).
		
		Now we note that the binomial coefficients \(\binom{n}{k},\,0\le k\le n\) satisfy the inequalities
		\[
		\binom{n}{0}\le\binom{n}{1}\le\cdots\le\binom{n}{\lfloor n/2\rfloor}=\binom{n}{\lceil n/2\rceil}\ge\cdots\ge\binom{n}{n-1}\ge\binom{n}{n}.
		\]
		Thus we get
		\[
		|\mc{A}|=\sum_{k=0}^np_k(\mc{A})=\binom{n}{\lceil n/2\rceil}\sum_{k=0}^n\frac{p_k(\mc{A})}{\binom{n}{\lceil n/2\rceil}}\le\binom{n}{\lceil n/2\rceil}\sum_{k=0}^n\frac{p_k(\mc{A})}{\binom{n}{k}}\le\binom{n}{\lceil n/2\rceil}.\qedhere
		\]
	\end{proof}
	
	A stark contrast to Sperner's theorem (Theorem~\ref{thm:sperner}) is provided by the answer to Question 2.5 for the poset \(\mf{P}_n\).  Note that for any \(\mc{A}\in\mf{P}_n\), we have \(1\le |\mc{A}|\le n\).  The \tsf{Stirling numbers of the second kind} are defined as
	\[
	\mrm{S}(n,k)=|\{\mc{A}\in\mf{P}_n:|\mc{A}|=k\}|,\quad1\le k\le n.
	\]
	For \(n\in\mb{Z}^+\), let \(\normalfont\tsf{S}_n=\max_{1\le k\le n}\mrm{S}(n,k)\) and \(\normalfont\tsf{A}_n\) be the size of the largest antichain in \(\mf{P}_n\).  Appealing to Sperner's theorem (Theorem~\ref{thm:sperner}) the following is a reasonable conjecture.
	\begin{conjecture}\label{conj:partition-antichain}
		For every \(n\in\mb{Z}^+\),
		\[
		\normalfont\tsf{A}_n\le\normalfont\tsf{S}_n.
		\]
	\end{conjecture}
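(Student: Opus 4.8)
The plan is to imitate Lubell's proof of Sperner's theorem (Theorem~\ref{thm:sperner}) as closely as possible, exploiting that \(\mf{P}_n\) is also a ranked poset. Rank a partition by \(n\) minus its number of blocks, so that the maximal chains run from the bottom element \(\hat 0=\{1,2,\ldots,n\}\) (all singletons, rank \(0\)) to the top element \(\hat 1=\{[n]\}\) (rank \(n-1\)), and each covering step merges exactly two blocks. The rank-\(r\) level consists of the partitions with \(n-r\) blocks, and it is itself an antichain, since a proper refinement strictly increases the number of blocks, so two distinct partitions with the same number of blocks are incomparable. This level has size \(\mrm{S}(n,n-r)\), which immediately yields the trivial bound \(\tsf{A}_n\ge\tsf{S}_n\). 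Hence the conjecture is equivalent to the assertion that \(\mf{P}_n\) has the \emph{Sperner property}: no antichain is larger than the largest level.

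First I would set up the double count. Let \(M\) be the total number of maximal chains in \(\mf{P}_n\), and for a partition \(\pi\) let \(m(\pi)\) be the number of maximal chains passing through \(\pi\). Exactly as in Lubell's argument, the elements of an antichain \(\mc{A}\) lie on pairwise disjoint sets of maximal chains, so \(\sum_{\pi\in\mc{A}}m(\pi)\le M\). If \(m(\pi)\) depended only on the rank of \(\pi\) --- say \(m(\pi)=M/W_r\), with \(W_r\) the size of the rank-\(r\) level, as happens in \(2^{[n]}\) where \(m(A)=|A|!(n-|A|)!=n!/\binom{n}{|A|}\) --- then we would obtain the clean LYM inequality \(\sum_{\pi\in\mc{A}}1/W_{r(\pi)}\le1\). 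Combined with the unimodality of the sequence \((W_r)=(\mrm{S}(n,n),\ldots,\mrm{S}(n,1))\), this would finish the proof verbatim, by factoring out the largest level size just as in the binomial case.

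The hard part, and in fact the crux of the whole difficulty, is that \(m(\pi)\) is \emph{not} a function of the rank of \(\pi\) alone. Writing \(m(\pi)\) as a product of the number of chains from \(\hat 0\) up to \(\pi\) and the number from \(\pi\) up to \(\hat 1\), each factor depends on the entire multiset of block sizes of \(\pi\), not merely on how many blocks there are. Two partitions with the same number of blocks but different block-size profiles therefore sit on different numbers of maximal chains, so the contributions \(m(\pi)\) are not constant within a level. Consequently the double count refuses to collapse into a single comparison between \(|\mc{A}|\) and one level size, and the LYM template stalls precisely here. This irregularity between levels is exactly what the Boolean lattice lacks and what makes \(2^{[n]}\) so much more tractable.

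To rescue the argument one would need a genuinely stronger structural input than bare chain-counting: either a \emph{normalized matching property} for \(\mf{P}_n\) (equivalently, a symmetric chain decomposition, or flows between consecutive levels saturating the smaller level), from which the Sperner property would follow together with the unimodality of the level sizes by standard poset theory; or a direct compression or injection argument tailored to set partitions. This is where I expect the approach to break down. There is no evident reason the normalized matching property should hold for so irregular a lattice, and establishing --- or indeed refuting --- the required inequality seems to demand delicate asymptotic control of the Stirling numbers and of the fine antichain structure rather than a soft counting argument. That is exactly why the statement is recorded here as a conjecture rather than a theorem.
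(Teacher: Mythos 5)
Your proposal cannot be completed, and not for lack of a clever lemma: the statement you are trying to prove is \emph{false}. In the paper this assertion appears only as a deliberately ``reasonable-looking'' conjecture, stated in order to be immediately refuted --- the very next result quoted is Canfield's theorem~\cite{canfield1995large} that \(\tsf{A}_n>n^{1/35}\,\tsf{S}_n\) for all sufficiently large \(n\), and in fact \(\lim_{n\to\infty}\tsf{A}_n/\tsf{S}_n=\infty\). So the partition lattice \(\mf{P}_n\) does \emph{not} have the Sperner property, the paper contains no proof of the conjecture to compare yours against, and any argument that did establish \(\tsf{A}_n\le\tsf{S}_n\) for all \(n\) would necessarily contain an error.

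To your credit, your write-up never claims to close the gap, and you stall in exactly the right place: the number \(m(\pi)\) of maximal chains through a partition \(\pi\) depends on the entire multiset of block sizes, not merely on the rank, so the LYM computation does not collapse to a comparison with a single level size. Your suspicion about the normalized matching property is also vindicated in the strongest possible way: since normalized matching together with unimodality of the level sizes (the Stirling numbers \(\mrm{S}(n,k)\), whose unimodality the paper establishes in Section~3) would imply the Sperner property, Canfield's theorem shows that \(\mf{P}_n\) fails normalized matching for large \(n\), so every rescue route you list (symmetric chain decompositions, saturating flows, compression) is doomed as well. Where your account goes wrong is the final inference: the inequality is not ``recorded as a conjecture rather than a theorem'' because the question is open and delicate; it is recorded as a conjecture precisely so that the paper can exhibit a natural analogue of Sperner's theorem that turns out to be false, and dramatically so. The correct conclusion of your analysis should have been that the obstruction you found is not an artifact of the method but a symptom of the statement's falsity.
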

	Conjecture~\ref{conj:partition-antichain} is false, by the following result of Canfield~\cite{canfield1995large}.  In fact, we get \(\lim_{n\to\infty}\tsf{A}_n/\tsf{S}_n=\infty\).
	\begin{theorem}[\cite{canfield1995large}]
		\(\normalfont\tsf{A}_n>n^{1/35}\,\normalfont\tsf{S}_n\), for sufficiently large \(n\).
	\end{theorem}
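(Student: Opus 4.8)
The plan is to exhibit, for all large \(n\), a single antichain in \(\mf{P}_n\) that exceeds \(\tsf{S}_n\) by a factor \(n^{1/35}\). Note first that the ranks of \(\mf{P}_n\) (grouping partitions by their number of blocks) are themselves antichains, the largest having size \(\tsf{S}_n=\mrm{S}(n,k^*)\) where \(k^*\) is the mode of \(k\mapsto\mrm{S}(n,k)\); thus \(\tsf{A}_n\ge\tsf{S}_n\) holds trivially and the entire content is to beat one rank by a polynomial factor. My first step would be to pin down the shape of the Stirling sequence near its peak by a saddle-point / local limit analysis: one shows \(k^*\sim n/\log n\), that \(\mrm{S}(n,k)\) is approximately Gaussian in \(k\) with standard deviation \(\sigma_n\) of order roughly \(\sqrt{n}/\log n\), and hence that \(B_n=\sum_k\mrm{S}(n,k)\approx\sqrt{2\pi}\,\sigma_n\,\tsf{S}_n\), with a band of \(\Theta(\sigma_n)\) consecutive ranks all of size within a constant factor of \(\tsf{S}_n\). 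This flat band is the raw material for a larger antichain.

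Next I would set up the mechanism for combining ranks. For consecutive block-counts \(k\) and \(k-1\), write \(L_k,L_{k-1}\) for the corresponding ranks and, for \(U\subseteq L_k\), let \(\partial^+U\subseteq L_{k-1}\) be its upper shadow (the \((k-1)\)-block partitions obtained by merging two blocks of some member of \(U\)). Since no element of \(L_{k-1}\setminus\partial^+U\) lies above any element of \(U\), the family \(U\cup(L_{k-1}\setminus\partial^+U)\) is an antichain of size \(|L_{k-1}|+|U|-|\partial^+U|\). Beating a rank therefore reduces to producing sets with \(|\partial^+U|<|U|\) — a quantitative failure of the normalized matching property of \(\mf{P}_n\). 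A single such trade adds at most \(|L_k|\le\tsf{S}_n\) elements (a factor at most \(2\)), so to reach a factor \(n^{1/35}\) I would iterate the construction across a whole band of \(\Theta(n^{1/35})\) ranks around \(k^*\), accumulating the surplus \(|U|-|\partial^+U|\) rank by rank into one global antichain.

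The engine making \(|\partial^+U|<|U|\) possible is the stark asymmetry of up- and down-degrees in \(\mf{P}_n\): every \(k\)-block partition has exactly \(\binom{k}{2}\) merges (uniform up-degree), whereas a \((k-1)\)-block partition containing a block of size \(s\) is refined by at least \(2^{s-1}-1\) partitions (its down-degree), which is already polynomial in \(n\) once \(s\) is of the typical order \(\log n\). Thus coarse partitions with a few large blocks have enormous preimages under merging, so a well-chosen \(U\) can have a shadow much smaller than itself. The construction I would use selects partitions with carefully prescribed block-size profiles so that the extraneous merges — those not landing in the intended target set — collide into a small region, keeping the iterated upper shadows small and making the number of elements one must delete to destroy comparabilities a lower-order term.

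The hard part is the global control of comparabilities. Two partitions several ranks apart can be comparable, so it is not enough to analyze adjacent ranks in isolation; one must bound iterated shadows across the entire band and show that a substantial fraction of each rank can be retained simultaneously in one antichain. This is where the sharp local limit theorem for \(\mrm{S}(n,k)\) must be married to delicate combinatorial estimates on the distribution of block sizes (via the Poissonized Bell measure) and on how merging spreads mass between ranks. Finally I would optimize the band width and the profile constraints; the resulting bound is the factor \(n^{1/35}\). The peculiar exponent is merely an artifact of these provable but lossy estimates — indeed the same circle of ideas, pushed toward the full band of width \(\sigma_n\), is what yields \(\tsf{A}_n/\tsf{S}_n\to\infty\), the statement recorded just after the theorem.
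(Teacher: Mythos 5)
First, a framing point: the paper does not prove this theorem at all — it is quoted from Canfield~\cite{canfield1995large} as the refutation of the conjecture preceding it — so the only meaningful comparison is with the argument in that cited source, which proceeds quite differently from your sketch. Your two-rank mechanism is itself correct: for \(U\subseteq L_k\), the set \(U\cup(L_{k-1}\setminus\partial^+U)\) is an antichain, since a \((k-1)\)-block partition lies above a \(k\)-block partition exactly when it arises from it by a single merge. The genuine gap is quantitative, and it sits exactly where you wrote ``the hard part.'' Your band-width arithmetic (``a factor at most \(2\) per trade, hence \(\Theta(n^{1/35})\) ranks'') presumes that at a peak rank one can choose \(U\) with surplus \(|U|-|\partial^+U|=\Omega(\tsf{S}_n)\). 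Double counting shows this is impossible: every element of \(U\) has exactly \(\binom{k}{2}\) upper covers, so \(|\partial^+U|\ge|U|\binom{k}{2}/D\), where \(D\) is the largest down-degree occurring in \(\partial^+U\); any surplus must therefore come from shadow elements whose down-degree exceeds \(\binom{k}{2}\). Writing \(re^r=n\) (so \(r\sim\log n\) and the peak number of blocks is \(k\approx e^r\)), a partition with block sizes \(s_i\) has down-degree \(\sum_i(2^{s_i-1}-1)\), so exceeding \(\binom{k}{2}\approx e^{2r}/2\) forces a block of size roughly \(2r/\log 2\approx 2.89\,r\). But the expected number of blocks of size \(j\) in a peak partition is about \(r^j/j!\), so the typical largest block has size about \(er\approx 2.72\,r\), and the proportion of peak-rank partitions containing a block of size \(\ge 2r/\log 2\) is only \(n^{-c+o(1)}\) for an explicit constant \(c>0\) (about \(n^{-0.17}\)). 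Hence each trade gains at most a polynomially small fraction of \(\tsf{S}_n\), not a constant fraction; your band would have to be polynomially longer than \(n^{1/35}\); and the control of iterated shadows and cross-rank comparabilities over such a band — which you defer to ``delicate combinatorial estimates'' — is not a detail but the entire content of the theorem. Historically this is precisely why shadow-trading arguments (the ones behind the original disproofs of Rota's Sperner conjecture in the late 1970s) never produced more than marginal gains over \(\tsf{S}_n\).

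The cited proof sidesteps the difficulty with a different device, and this is the idea your proposal is missing. Rather than stacking local trades, one takes a statistic such as \(f(\pi)=\alpha\,|\pi|+\beta\,s(\pi)\), where \(|\pi|\) is the number of blocks, \(s(\pi)\) the number of singleton blocks, and \(\alpha,\beta>0\): merging two blocks decreases \(|\pi|\) by \(1\) and \(s(\pi)\) by \(0\), \(1\), or \(2\), so \(f\) strictly decreases along every cover relation, and \emph{every} level set of \(f\) is automatically an antichain — all comparabilities, across any number of ranks, are excluded at once with no shadow analysis whatsoever. The gain then comes from a bivariate local limit theorem for the joint distribution of \((|\pi|,s(\pi))\) over \(\mf{P}_n\): the covariance ellipse is tilted relative to the coordinate axes, so the largest slice in a suitable tilted direction is polynomially larger than the largest coordinate slice, which is \(\tsf{S}_n\); optimizing the available estimates is what produces the artificial-looking exponent \(1/35\), and refining the same scheme is what later gives the sharp exponent in the 1998 theorem quoted after this one. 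If you want to salvage your proposal, the realistic repair is to replace the iterated-shadow scheme by this ``monotone statistic / tilted slicing'' idea, which converts the whole comparability problem into an analytic statement about a two-dimensional Gaussian approximation of the Stirling-type numbers.
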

	The best answer to Question~\ref{ques:antichain} for \(\mf{P}_n\) is by Canfield~\cite{canfield1998size} (also see Graham~\cite{graham1978maximum}), as follows.
	\begin{theorem}[\cite{canfield1998size}]
		There exist constants \(c_1,c_2>0\) such that for all \(n>1\),
		\[
		\frac{c_1n^a}{(\log n)^{a+1/4}}\,\normalfont\tsf{S}_n\le\normalfont\tsf{A}_n\le \frac{c_2n^a}{(\log n)^{a+1/4}}\,\normalfont\tsf{S}_n,
		\]
		where \(a=(2-e\log 2)/4\).
	\end{theorem}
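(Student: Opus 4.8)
The plan is to prove the two inequalities separately, since the lower and upper bounds on \(\tsf{A}_n\) rest on dual halves of the same picture but demand different effort. The common foundation is a sharp understanding of the profile \(k\mapsto\mrm{S}(n,k)\) near its maximum, so the first step is to extract second-order asymptotics for the Stirling numbers by the saddle-point method applied to
\[
\mrm{S}(n,k)=\frac{n!}{k!}\,[x^n](e^x-1)^k.
\]
This locates the maximizing block-count \(K_n\), pins down the peak value \(\tsf{S}_n=\mrm{S}(n,K_n)\), and supplies a local central-limit approximation
\[
\mrm{S}(n,K_n+j)\approx\tsf{S}_n\exp\!\left(-\frac{j^2}{2\sigma_n^2}\right)
\]
valid on the scale of the width \(\sigma_n\). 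Every later estimate will be phrased in terms of \(K_n\) and \(\sigma_n\). It is worth recording at the outset why the problem is hard: the level sizes \(\mrm{S}(n,k)\) are unimodal in \(k\), so if \(\mf{P}_n\) enjoyed the normalized matching (LYM) property it would be Sperner and we would have \(\tsf{A}_n=\tsf{S}_n\); the earlier result of Canfield~\cite{canfield1995large} shows this is false by a growing factor, so the entire content is to \emph{quantify} the failure of normalized matching, not merely to observe it.

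For the lower bound I would build an explicit antichain out of a window \(B\) of \(\Theta(\sigma_n)\) consecutive block-counts centred at \(K_n\). A single level is already an antichain of size \(\mrm{S}(n,k)\), so the gain must come from amalgamating many levels while deleting few comparable pairs. The key structural fact is that the only cover relations coarsen a partition by merging two of its blocks, so comparabilities between two partitions whose block-counts differ by \(d\) are governed by chains of \(d\) successive merges; counting these against the Gaussian profile shows that within \(B\) the comparability graph is \emph{sparse}. One then produces a large antichain by an independent-set argument in this comparability graph—crudely via a Caro--Wei bound \(\sum_{\pi\in B}(1+\deg(\pi))^{-1}\), and sharply via a weighted (fractional) relaxation that routes mass according to how merges redistribute the block-size profile. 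Optimizing the width of the window and the tilt of the weighting is precisely what manufactures the factor \(n^a/(\log n)^{a+1/4}\); the constant \(a=(2-e\log2)/4\) appears as the optimum of the resulting variational problem, the \(\log 2\) reflecting the binary character of a merge and the \(e\) the Poissonian shape of the block statistics.

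For the upper bound I would exploit Dilworth duality, whereby the maximum antichain equals the minimum chain cover, and exhibit a near-optimal (fractional) chain cover of \(\mf{P}_n\). Concretely, I would construct a normalized flow through the Hasse diagram from the bottom partition to the top, spreading its mass across levels in a manner dictated by the same normalized-matching defect that drove the construction above; integrating this flow yields a generalized LYM inequality of the form \(\sum_{\pi\in\mc{A}}w(\pi)\le1\) with weights \(w\) large enough to force \(|\mc{A}|\le c_2\,(n^a/(\log n)^{a+1/4})\,\tsf{S}_n\). Because the lower-bound antichain (primal) and the upper-bound chain cover (dual) are two sides of the same linear program, the optimization fixing \(a\) is literally shared between them.

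The main obstacle, and the real work, is that this linear program has exponentially many variables and constraints and cannot be solved abstractly, precisely because \(\mf{P}_n\) is not a normalized matching poset: one must produce a primal antichain and a dual chain cover whose values agree to first order \emph{in the exponent}, including the secondary power \((\log n)^{a+1/4}\). Matching that secondary power is delicate, as it hinges on the genuinely second-order asymptotics of \(\mrm{S}(n,k)\) (the width \(\sigma_n\) and the curvature of \(\log\mrm{S}(n,k)\) at the peak) together with a precise, not merely order-of-magnitude, accounting of how comparabilities between adjacent levels scale. This is where the argument of Graham~\cite{graham1978maximum} must be pushed from a one-sided estimate to a tight two-sided one, and it is the step I expect to be hardest to make rigorous.
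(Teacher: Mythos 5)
The paper does not prove this theorem at all: it is quoted verbatim from Canfield~\cite{canfield1998size} (with a pointer to Graham~\cite{graham1978maximum}) as a known, research-level result, so there is no internal proof to compare yours against. Judged on its own terms, your text is a research programme rather than a proof, and its two load-bearing steps have concrete gaps that are not polishable details but the entire content of Canfield's theorem.

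For the lower bound, your fallback argument fails outright: a partition with $k\approx K_n\sim n/\log n$ blocks has $\binom{k}{2}$ upper covers (merge any two blocks), so inside a window $B$ of $\Theta(\sigma_n)$ levels around the peak the comparability degrees are at least of order $(n/\log n)^2$, and the Caro--Wei sum $\sum_{\pi\in B}(1+\deg\pi)^{-1}$ is smaller than a single level $\tsf{S}_n$ by a polynomial factor --- it cannot even recover the trivial antichain, let alone a gain of $n^a/(\log n)^{a+1/4}$. Everything therefore rides on the ``weighted (fractional) relaxation,'' for which you specify no weights, verify no constraints, and solve no optimization; note also that the actual known constructions do not amalgamate whole levels and then sparsify, but rather select partitions \emph{within} levels by pinning statistics of the block-size profile, so that a merge necessarily changes the statistic and comparabilities are excluded outright rather than counted. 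For the upper bound, invoking Dilworth (or its LP relaxation) is an identity, not a method: the minimum chain cover, equivalently the weights $w$ with $\sum_{\pi\in\mc{A}}w(\pi)\le 1$ for every antichain $\mc{A}$, is exactly the object that must be constructed, and your ``normalized flow through the Hasse diagram'' is never defined nor shown feasible. Finally, the exponent $a=(2-e\log 2)/4$ is asserted to ``appear as the optimum of the resulting variational problem,'' but no variational problem is ever written down; as it stands, the constant is reverse-engineered from the statement rather than derived, and the secondary power $(\log n)^{a+1/4}$ --- which you correctly identify as the delicate part --- is nowhere accounted for.
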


	\section{Unimodality and related properties, and connections with polynomials}\label{sec:unimodality}
	
	In this section, we will collect some more properties of interest of combinatorial sequences.
	
	\subsection{Unimodality in ranked posets}
	
	Let \(P\) be a poset.  For \(x,y\in P\), we say \(y\) \tsf{covers} \(x\), denoted by \(x\lessdot y\) if \(x<y\) and there is no \(z\in P\) such that \(x<z<y\).  In other words, \(x\lessdot y\) if \(x<y\) and \(x,y\) are consecutive.  A function \(\rho:P\to\mb{N}\) is called a \tsf{rank function} on \(P\) if
	\begin{enumerate}[(a)]
		\item  \(\rho\) is monotonic with respect to \(\le\), that is, either \(\rho\) is increasing on \(P\) or \(\rho\) is decreasing on \(P\).
		\item  for any \(x,y\in P\), if \(x\lessdot y\), then \(|\rho(x)-\rho(y)|=1\).
	\end{enumerate}
	We say \(P\) is \tsf{ranked} if there exists a rank function on \(P\).  Now suppose \(\rho\) is a rank function on \(P\).  Since \(P\) is finite, let \(m=\min_{x\in P}\rho(x)\) and \(M=\max_{x\in P}\rho(x)\).  Then the sets \(A(\rho)_i=\rho^{-1}(i),\,i\in\{m,m+1,\ldots,M\}\) is a partition of \(P\).  Further, each \(A(\rho)_i\) is an antichain.
	
	The following observation is immediate.
	\begin{observation}
		Let \(P\) be a poset and let \(\rho,\lambda\) be two rank functions on \(P\).  Then there exist \(d\in\mb{Z}^+\) and \(a\in\{-1,1\}\) such that \(\lambda=a\rho+d\).
	\end{observation}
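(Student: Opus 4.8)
The plan is to reduce the global identity $\lambda = a\rho + d$ to a purely local statement about cover relations, and then to propagate that local information across the whole poset. First I would split into cases according to the monotonicity type of each rank function: by definition each of $\rho,\lambda$ is either increasing or decreasing on all of $P$, giving four combinations. When $\rho$ and $\lambda$ share the same monotonicity (both increasing or both decreasing) I set $a=1$; when they have opposite monotonicity I set $a=-1$. The claim then becomes that the function $f\coloneqq\lambda-a\rho$ is a single constant $d$.

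The key local observation is that $f$ does not change along any cover relation. Fix $x\lessdot y$. Condition (b) in the definition of a rank function forces $|\rho(x)-\rho(y)|=1$ and $|\lambda(x)-\lambda(y)|=1$, while condition (a) fixes the signs of these differences from the global monotonicity type. In the same-direction cases both $\rho$ and $\lambda$ change by $+1$ (or both by $-1$) across the cover, so $\lambda(y)-\rho(y)=\lambda(x)-\rho(x)$; in the opposite-direction cases one increases by $1$ while the other decreases by $1$, so $\lambda(y)+\rho(y)=\lambda(x)+\rho(x)$. Either way $f(x)=f(y)$ whenever $x\lessdot y$.

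Next I would upgrade ``constant along every cover'' to ``globally constant.'' Regard the Hasse diagram of $P$ as an undirected graph, with an edge between $x$ and $y$ exactly when $x\lessdot y$ or $y\lessdot x$. Since $f$ is unchanged across each edge, it is constant on every connected component of this graph: any two elements joined by a path of covers receive the same value. If $P$ is connected, $f$ takes one value $d$ everywhere, which is precisely $\lambda=a\rho+d$.

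The main obstacle is exactly this last passage from local to global: the argument yields a single constant only when the Hasse diagram is connected, and the statement as phrased does not assume this. For a disconnected poset, such as two disjoint chains, one can rank the components with independent additive offsets, so distinct components may force different values of $d$ and the conclusion fails literally; I would therefore either add a connectedness hypothesis or state the result componentwise. A second, milder subtlety concerns the range of $d$: evaluating $f$ shows $d\in\mathbb{Z}$, and indeed $d=\lambda+\rho\ge 0$ in the opposite-direction case, but in the same-direction case $d=\lambda-\rho$ can be any integer (for instance $d=0$ when $\lambda=\rho$), so the assertion $d\in\mathbb{Z}^+$ should be relaxed to $d\in\mathbb{Z}$ unless one fixes a normalization of the rank functions.
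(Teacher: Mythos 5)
The paper offers no argument at all for this statement---it is labelled an immediate observation---so there is nothing to compare line by line; your write-up is essentially the proof the author must have had in mind. Your local-to-global argument is correct: the sign $a$ is forced by the two monotonicity types, condition (b) pins the change of each rank function across a cover to exactly $\pm1$, so $\lambda-a\rho$ is constant along every edge of the Hasse diagram and hence on each connected component. More importantly, your two caveats are genuine defects of the statement as printed, not artifacts of your approach. First, without connectedness the claim is false: on a two-element antichain $\{x,y\}$ both conditions (a) and (b) are vacuous, so $\rho=(0,5)$ and $\lambda=(3,0)$ are rank functions, and $\lambda=a\rho+d$ would force $a=-3/5$. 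Second, $d\in\mb{Z}^+$ cannot be right, since $\lambda=\rho$ gives $d=0$, and shifting one rank function up (the definition imposes no normalization of the minimum value) gives negative $d$; the correct range is $d\in\mb{Z}$. So your proof is sound exactly where the statement is true, and the fix you propose---assume the Hasse diagram connected, or state the conclusion componentwise, with $d\in\mb{Z}$---is what the observation needs in order to hold as a theorem.
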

	
	The following results are classical and we mention them without reference.
	\begin{theorem}[Folklore]
		\begin{enumerate}[(a)]
			\item  The poset \(2^{[n]}\) is ranked with rank function \(\#\) defined by
			\[
			\#(A)=|A|,\quad\tx{for all }A\subseteq[n].
			\]
			\item  The poset \(\mf{S}_n\) is ranked with rank function \(\inv\) (called the \tsf{inversion statistic}) defined by
			\[
			\inv(\pi)=|\{(i,j)\in[n]^2:i<j,\,\pi_i>\pi_j\}|,\quad\tx{for all }\pi\in\mf{S}_n.
			\]
			\item  The poset \(\mf{P}_n\) is ranked with rank function \(\#\) defined by
			\[
			\#(\mc{A})=|\mc{A}|,\quad\tx{for all }\mc{A}\in\mf{P}_n.
			\]
		\end{enumerate}
	\end{theorem}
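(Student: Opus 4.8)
The plan is to handle the three parts by a single template, since in each case being a rank function reduces to two checks against the \emph{atomic move} that generates the order: first, that the candidate function is monotone along $\le$, and second, that every cover relation $\lessdot$ is realized by exactly one such atomic move, which shifts the function value by precisely $1$. The monotonicity check will always follow by unwinding the definition of the order, while the cover check requires pinning down exactly what a cover looks like.

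For part (a) the atomic move is adjoining one element. Monotonicity is immediate, since $A \le B$ means $A \subseteq B$ and hence $|A| \le |B|$, so $\#$ increases. For the covers I would argue that $A \lessdot B$ forces $B = A \cup \{x\}$ for a single $x \notin A$: otherwise $|B \setminus A| \ge 2$, and choosing any $x \in B \setminus A$ yields $A \subsetneq A \cup \{x\} \subsetneq B$, contradicting the cover; thus $\#(B) = \#(A) + 1$. Part (c) runs on the same template with \emph{merge} in place of \emph{adjoin}. Here $\mathcal{A} \le \mathcal{B}$ says each block of $\mathcal{A}$ lies inside a block of $\mathcal{B}$, so every block of $\mathcal{B}$ is a disjoint union of blocks of $\mathcal{A}$, and therefore $|\mathcal{A}| \ge |\mathcal{B}|$, making $\#$ decreasing. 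For covers I would show that $\mathcal{A} \lessdot \mathcal{B}$ holds exactly when $\mathcal{B}$ arises from $\mathcal{A}$ by fusing two of its blocks: if some block of $\mathcal{B}$ absorbed three or more blocks of $\mathcal{A}$, or if two distinct blocks of $\mathcal{B}$ each absorbed several, then performing only a partial fusion produces an intermediate $\mathcal{C}$ with $\mathcal{A} < \mathcal{C} < \mathcal{B}$; a single fusion drops the block count by one, so $\#(\mathcal{A}) = \#(\mathcal{B}) + 1$.

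Part (b) is the one I expect to carry the real content, because here $\le$ is defined as the transitive closure of the single-swap relation $<$, so the cover relation must be \emph{recovered} rather than read off directly. The key computation is that the generating relation already lowers $\inv$ by exactly $1$: if $\sigma < \pi$ via swapping the descent $\pi_i > \pi_{i+1}$, then the swap leaves the inversion count unchanged on every pair other than $(i,i+1)$---the pairs straddling a single swapped position cancel in couples---while the pair $(i,i+1)$ itself flips from an inversion to a non-inversion, giving $\inv(\sigma) = \inv(\pi) - 1$. From this single fact everything follows: $\inv$ strictly increases along each $<$-step and hence along every defining chain, which yields monotonicity (and, as a byproduct, the antisymmetry of $<$ underlying $\le$); no element can lie strictly between $\sigma$ and $\pi$ when $\sigma < \pi$, so every generating relation is already a cover; and conversely any cover $\sigma \lessdot \pi$ unfolds into a chain of single swaps that must have length $1$, since a longer chain would exhibit an intermediate element. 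Hence the covers of the weak Bruhat order are precisely the single swaps, each moving $\inv$ by exactly $1$, and the main care needed is simply to route all the bookkeeping through the strict monotonicity of $\inv$.
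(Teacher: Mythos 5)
Your proposal is correct, but note that the paper itself offers no proof at all: it states this result as ``Folklore'' and explicitly says such results are ``mentioned without reference,'' so there is no argument in the paper to compare yours against. Your write-up supplies exactly the missing content, and all three parts check out: the characterization of covers in $2^{[n]}$ as single-element adjunctions, the characterization of covers in $\mf{P}_n$ as fusions of exactly two blocks (with $\#$ correctly identified as \emph{decreasing}, which the paper's definition of rank function permits), and, in the only genuinely delicate part, the weak Bruhat order, the key computation that an adjacent descent swap changes $\inv$ by exactly $1$, from which strict monotonicity of $\inv$ forces the generating relations and the cover relations to coincide. That last step --- recovering the covers from the transitive-closure definition of $\le$ via the rank-like behavior of $\inv$, rather than trying to analyze covers directly --- is the standard and correct way to handle part (b), and your bookkeeping there is sound.
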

	
	Let \(P\) be a poset with a rank function \(\rho\).  Consider the antichains \(A(\rho)_i=\rho^{-1}(i),\,i\in\{0,1,\ldots,M\}\), where \(M=\max_{x\in P}\rho(x)\).  Define \(P\) to be \tsf{unimodal} if there exists \(T\in\{0,1,\ldots,M\}\) such that
	\[
	|A(\rho)_0|\le|A(\rho)_1|\le\cdots\le|A(\rho)_T|\ge|A(\rho)_{T+1}|\ge\cdots\ge|A(\rho)_M|.
	\]
	By the following inequalities satisfied by the binomial coefficients,
	\[
	\binom{n}{0}\le\binom{n}{1}\le\cdots\le\binom{n}{\lfloor n/2\rfloor}=\binom{n}{\lceil n/2\rceil}\ge\cdots\ge\binom{n}{n-1}\ge\binom{n}{n},
	\]
	it is clear that \(2^{[n]}\) is unimodal.  A simple corollary of a fundamental result by Stanley~\cite{stanley1980weyl} is that the inversion poset \(\mf{S}_n\) with the weak Bruhat order is unimodal.  At the end of this section, with some more results from the literature at our disposal, we would see that the poset \(\mf{P}_n\) is also unimodal.

	\subsection{Unimodality, log-concavity and real-rootedness}
	
	Consider a finite sequence \((a_k)_{k=0}^n\) of real numbers.  We say \((a_k)_{k=0}^n\) is \tsf{unimodal} if there exists \(T\in\{0,\ldots,n\}\) such that
	\[
	a_0\le\cdots\le a_T\ge a_{T+1}\ge a_n.
	\]
	We say \((a_k)_{k=0}^n\) is \tsf{log-concave} if \(a_k^2\ge a_{k-1}a_{k+1}\), for \(1\le k\le n-1\).  It is easy to check that a nonnegative log-concave sequence is unimodal.
	\begin{proposition}\label{prop:log-conc--unimodal}
		Let \((a_k)_{k=0}^n\) be nonnegative and log-concave.  Then \((a_k)_{k=0}^n\) is unimodal.
	\end{proposition}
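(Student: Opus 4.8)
The plan is to reduce the statement to the strictly positive case, where a clean argument via ratios of consecutive terms applies, and then to handle the zeros separately. The core observation is that if every term were strictly positive, then the defining inequality \(a_k^2\ge a_{k-1}a_{k+1}\) could be rewritten as \(a_k/a_{k-1}\ge a_{k+1}/a_k\), so the sequence of successive ratios \(r_k=a_k/a_{k-1}\) would be non-increasing. A non-increasing sequence of positive ratios crosses the value \(1\) at most once, meaning there is a threshold \(T\) with \(r_k\ge 1\) for \(k\le T\) and \(r_k<1\) for \(k>T\); this is exactly the statement that \((a_k)\) rises up to index \(T\) and then falls, i.e.\ is unimodal.

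The only thing that can break this argument is a zero term, since then the ratios are undefined. So the first and main step is to show that the \emph{support} \(S=\{k:a_k>0\}\) is a contiguous block of indices. This follows from log-concavity: if \(a_{k-1}>0\) and \(a_{k+1}>0\) for some interior index \(k\), then \(a_k^2\ge a_{k-1}a_{k+1}>0\) forces \(a_k>0\). Hence no zero can be wedged strictly between two positive terms, and consequently \(S\), if nonempty, is an interval \(\{p,p+1,\ldots,q\}\). (If \(S\) is empty the sequence is identically zero and trivially unimodal.)

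On the interval \(S\) every term is positive, so the ratio argument above applies verbatim and yields an index \(T\in\{p,\ldots,q\}\) with \(a_p\le\cdots\le a_T\ge\cdots\ge a_q\). It then remains to glue the zeros back: for \(k<p\) we have \(a_k=0\le a_p\), consistent with a non-decreasing initial run, and for \(k>q\) we have \(a_k=0\le a_q\), consistent with a non-increasing final run. Therefore the full sequence has the shape \(0,\ldots,0,a_p,\ldots,a_T,\ldots,a_q,0,\ldots,0\) and is unimodal with peak index \(T\). I expect the one place where a naive proof goes wrong to be precisely the handling of zeros: the ratio computation is legitimate only after the support has been shown to be an interval, and extracting that fact from log-concavity is the crux of the argument.
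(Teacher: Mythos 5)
Your overall architecture (reduce to the strictly positive case via a ``support is an interval'' lemma, run the monotone-ratio argument there, then glue the zeros back on at the two ends) is sensible, and steps two and three are correct as written. The genuine gap is precisely at the step you call the crux. Your local observation only excludes the pattern \(a_{k-1}>0\), \(a_k=0\), \(a_{k+1}>0\) at three \emph{consecutive} indices; it says nothing about a run of two or more zeros flanked by positive terms, so it does not yield that \(S=\{k:a_k>0\}\) is an interval. In fact no argument can yield this: the sequence \((1,0,0,1)\) is nonnegative and satisfies \(a_k^2\ge a_{k-1}a_{k+1}\) for \(k=1,2\) (every defining inequality reads \(0\ge 0\)), yet its support \(\{0,3\}\) is not an interval and the sequence is not unimodal. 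So this sequence is a counterexample not just to your intermediate claim but to the proposition exactly as stated with the paper's definition of log-concavity; the statement becomes true (and your steps two and three then give a complete proof) only under the additional standard hypothesis that the sequence has \emph{no internal zeros}, i.e.\ exactly the interval property you tried to derive, or under strict positivity.

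For comparison, the paper's own proof is a one-line contradiction: if the sequence is not unimodal, there is an \(r\) with \(a_{r-1}>a_r<a_{r+1}\), whence \(a_r^2\le a_ra_{r+1}<a_{r-1}a_{r+1}\) by nonnegativity, contradicting log-concavity. This argument has the same defect in the same place: failure of unimodality only guarantees indices \(i<j<k\) with \(a_i>a_j<a_k\), and the valley between them may have a flat bottom, so an \emph{adjacent strict} valley need not exist; again \((1,0,0,1)\) is the witness, since it is not unimodal yet no \(r\) satisfies \(a_{r-1}>a_r<a_{r+1}\). Once the no-internal-zeros hypothesis is added, both arguments can be completed: in the paper's version a flat-bottomed valley \(a_{r-1}>a_r=\cdots=a_s<a_{s+1}\) has bottom value \(v=a_r>0\), and log-concavity at \(r\) gives \(v^2\ge a_{r-1}v\), i.e.\ \(v\ge a_{r-1}\), a contradiction; in yours the support is an interval by hypothesis and the ratio argument applies. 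So your instinct that the zeros are the whole difficulty is exactly right --- the flaw is that the difficulty cannot be overcome from the stated hypotheses, only assumed away.
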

	\begin{proof}
		Suppose \((a_k)_{k=0}^n\) is not unimodal.  Then there exists \(1\le r\le n-1\) such that \(a_{r-1}>a_r<a_{r+1}\).  So \(a_r^2<a_{r-1}a_{r+1}\), which is not possible since \((a_k)_{k=0}^n\) is log-concave.  Thus \((a_k)_{k=0}^n\) is unimodal.
	\end{proof}
	
	Now consider any polynomial \(f(X)=a_0+a_1X+\cdots+a_nX^n\in\mb{R}[X]\).  We say \(f(X)\) is \tsf{unimodal} (\tsf{log-concave}) if \((a_k)_{k=0}^n\) is
	unimodal (log-concave).  In this sense, Proposition~\ref{prop:log-conc--unimodal} states that if a polynomial \(f(X)\in\mb{R}[X]\) is log-concave and has nonnegative coefficients, then \(f(X)\) is unimodal.
	
	For any \(f(X)\in\mb{R}[X]\), by the Fundamental Theorem of Algebra, \(f(X)\) has all its roots in \(\mb{C}\).  We say \(f(X)\) is \tsf{real-rooted} if \(f(X)\) has all its roots in \(\mb{R}\); in other words, \(f(X)\) is real-rooted if
	\[
	a\in\mb{C},\,f(a)=0\quad\implies\quad a\in\mb{R}.
	\]
	The following is a classical result by Newton (see Comtet~\cite[Chapter 7]{comtet2012advanced}).
	\begin{theorem}[Newton]\label{thm:real-root--log-conc}
		If \(f(X)\in\mb{R}[X]\) is real-rooted, then \(f(X)\) is log-concave.
	\end{theorem}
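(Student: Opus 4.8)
The plan is to prove the stronger \emph{Newton inequalities}, namely that the normalized coefficients \(p_k := a_k/\binom{n}{k}\) satisfy \(p_k^2 \ge p_{k-1}p_{k+1}\) for \(1 \le k \le n-1\), and then to deduce log-concavity. The two operations I would use are differentiation and coefficient reversal, both of which preserve real-rootedness. For differentiation the key input is Rolle's theorem: between any two consecutive distinct real roots of \(f\) lies a root of \(f'\), and a root of \(f\) of multiplicity \(m\) is a root of \(f'\) of multiplicity \(m-1\). If \(f\) has distinct roots \(r_1<\cdots<r_t\) with multiplicities \(m_1,\ldots,m_t\) summing to \(n\), then \(f'\) acquires \(\sum_i(m_i-1)=n-t\) roots from these together with at least \(t-1\) roots supplied by Rolle, totalling at least \(n-1=\deg f'\); hence \(f'\) has exactly \(n-1\) real roots and is again real-rooted. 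Iterating, every derivative \(f^{(j)}\) is real-rooted. For reversal, if \(g\) is real-rooted of degree \(m\) with \(g(0)\neq 0\), then \(X^m g(1/X)=b_m\prod_i(1-r_iX)\) has roots \(1/r_i\), all real.

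Fix an index \(k\) with \(1 \le k \le n-1\). If \(a_{k-1}=0\) or \(a_{k+1}=0\), then \(a_{k-1}a_{k+1}=0\le a_k^2\) and log-concavity at \(k\) is immediate, so I may assume \(a_{k-1},a_{k+1}\neq 0\). I would then isolate a quadratic whose three coefficients are explicit positive multiples of \(a_{k-1},a_k,a_{k+1}\): differentiate \(f\) exactly \(k-1\) times to obtain \(g=f^{(k-1)}\), whose three lowest coefficients are \(a_{k-1}(k-1)!\), \(a_k\,k!\), and \(a_{k+1}(k+1)!/2\); reverse \(g\) (legitimate since \(g(0)=a_{k-1}(k-1)!\neq 0\)) so that these three move to the top; then differentiate \(n-k-1\) further times to annihilate everything below them. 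The surviving polynomial is a genuine degree-two, real-rooted quadratic, so its discriminant is nonnegative, and after substituting the tracked factorial constants this discriminant inequality simplifies to exactly \(p_k^2 \ge p_{k-1}p_{k+1}\).

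Finally I would pass from the Newton inequality to log-concavity using the elementary log-concavity of binomial coefficients, \(\binom{n}{k}^2 \ge \binom{n}{k-1}\binom{n}{k+1}\). Rewriting \(p_k^2\ge p_{k-1}p_{k+1}\) as \(a_k^2 \ge a_{k-1}a_{k+1}\cdot\binom{n}{k}^2/\bigl(\binom{n}{k-1}\binom{n}{k+1}\bigr)\), the multiplier is \(\ge 1\). If \(a_{k-1}a_{k+1}\ge 0\) this yields \(a_k^2\ge a_{k-1}a_{k+1}\) directly; if \(a_{k-1}a_{k+1}<0\) then the right-hand side is negative while \(a_k^2\ge 0\), so the inequality holds trivially. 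Either way \(a_k^2\ge a_{k-1}a_{k+1}\), which is the asserted log-concavity.

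The main obstacle I expect is bookkeeping rather than ideas: keeping the multiplicities straight in the Rolle argument so that each derivative genuinely remains real-rooted, and carefully tracking the factorial constants through the differentiate–reverse–differentiate reduction so that the discriminant condition lands precisely on \(p_k^2\ge p_{k-1}p_{k+1}\). The degenerate cases—vanishing coefficients that would force a lower-degree reversal or a degenerate quadratic—also need a clean separate treatment, which is why I dispose of \(a_{k-1}=0\) and \(a_{k+1}=0\) at the outset.
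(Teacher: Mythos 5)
The paper does not actually prove this theorem: it states it as a classical result of Newton and defers to Comtet~\cite[Chapter 7]{comtet2012advanced}, so there is no internal argument to compare yours against line by line. Your proposal is a correct, self-contained proof, and it is in fact the classical one: reduce to a real-rooted quadratic by differentiating \(k-1\) times, reversing, and differentiating \(n-k-1\) more times, then read off Newton's inequality from the discriminant. The bookkeeping you flagged as the main risk does come out right: writing \(m=n-k+1\), the surviving quadratic is \(\tfrac{m!}{2}(k-1)!\,a_{k-1}X^2+(m-1)!\,k!\,a_kX+\tfrac{(m-2)!(k+1)!}{2}a_{k+1}\), and nonnegativity of its discriminant gives \(a_k^2\ge a_{k-1}a_{k+1}\cdot\frac{k+1}{k}\cdot\frac{m}{m-1}\), whose constant is exactly \(\binom{n}{k}^2\big/\bigl(\binom{n}{k-1}\binom{n}{k+1}\bigr)\), i.e.\ precisely \(p_k^2\ge p_{k-1}p_{k+1}\). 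Two features of your write-up are worth emphasizing rather than treating as routine. First, disposing of \(a_{k-1}=0\) and \(a_{k+1}=0\) at the outset is exactly what keeps the reversal degree-preserving and the final quadratic genuinely quadratic, so those cases are not cosmetic. Second, your sign-split deduction of \(a_k^2\ge a_{k-1}a_{k+1}\) from the normalized inequality matters here: the theorem as stated carries no nonnegativity hypothesis on the coefficients (unlike Proposition~\ref{prop:log-conc--unimodal} and Theorem~\ref{thm:real-root--unimodal}, which do), and your argument delivers log-concavity, in the paper's sense \(a_k^2\ge a_{k-1}a_{k+1}\), in this signed generality. I see no gaps.
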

	Applying Proposition~\ref{prop:log-conc--unimodal} and Theorem~\ref{thm:real-root--log-conc} in succession, we get the first \emph{test of unimodality} of combinatorial sequences.
	\begin{theorem}[Br\"and\'en~\cite{branden2015unimodality}]\label{thm:real-root--unimodal}
		Let \(f(X)\in\mb{R}[X]\) be real-rooted with nonnegative coefficients.  Then \(f(X)\) is unimodal.
	\end{theorem}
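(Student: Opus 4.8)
The plan is to observe that this theorem is exactly the composition of the two results just established, so the proof amounts to chaining them together in the right order. First I would invoke Theorem~\ref{thm:real-root--log-conc} (Newton's theorem): since $f(X)\in\mb{R}[X]$ is real-rooted by hypothesis, its coefficient sequence $(a_k)_{k=0}^n$ is log-concave, meaning $a_k^2\ge a_{k-1}a_{k+1}$ for all $1\le k\le n-1$. This is the substantive analytic input, but it is already black-boxed for us, so nothing needs to be reproved here.

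Next I would bring in the remaining hypothesis. We are told $f(X)$ has nonnegative coefficients, so $(a_k)_{k=0}^n$ is a nonnegative, log-concave sequence of real numbers. This is precisely the hypothesis of Proposition~\ref{prop:log-conc--unimodal}, which I would apply directly to conclude that $(a_k)_{k=0}^n$ is unimodal. By the definition of unimodality for polynomials (a polynomial is unimodal exactly when its coefficient sequence is), it follows that $f(X)$ is unimodal, completing the argument.

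Regarding the main obstacle: there is essentially none at the level of this proof, since both ingredients are available. The only point worth flagging is that both hypotheses are genuinely needed and must be used in the correct slots. Real-rootedness feeds Newton's theorem to yield log-concavity; nonnegativity is then indispensable for Proposition~\ref{prop:log-conc--unimodal}, as log-concavity alone does not force unimodality without it (a sign change among the $a_k$ can destroy the single-peak structure). Thus the entire content is the two-step reduction, and I would present it as such in a short paragraph rather than with any further computation.
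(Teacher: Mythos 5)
Your proposal is correct and matches the paper's proof exactly: the paper also derives the result by applying Theorem~\ref{thm:real-root--log-conc} (Newton) to get log-concavity and then Proposition~\ref{prop:log-conc--unimodal} to conclude unimodality from nonnegativity plus log-concavity. Your additional remark about why nonnegativity is genuinely needed is a nice touch, but the substance of the argument is the same two-step chaining the paper uses.
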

	\begin{proof}
		Follows immediately from Proposition~\ref{prop:log-conc--unimodal} and Theorem~\ref{thm:real-root--log-conc}.
	\end{proof}
	For any \(n\in\mb{Z}^+\), define
	\[
	B_n(X)=\sum_{k=1}^n\tsf{S}(n,k)X^k\quad\tx{and}\quad f_n(X)=e^XB_n(X).
	\]
	We can then apply Theorem~\ref{thm:real-root--unimodal} to \(f_n(X)\) and conclude that \((\tsf{S}(n,k))_{k=1}^n\) is unimodal, that is, the poset \(\mf{P}_n\) is unimodal.
	
	The following result of Stanley~\cite{stanley1989log} is very useful in proving the log concavity of more intricate combinatorial sequences.
	\begin{theorem}[Stanley~\cite{stanley1989log}]\label{thm:stanley-log-conc}
		If \(f(X),g(X)\in\mb{R}[X]\) are both log-concave with nonnegative coefficients, then \(f(X)g(X)\) is log-concave with nonnegative coefficients.
	\end{theorem}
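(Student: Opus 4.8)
The plan is to reduce the statement to an inequality about the \emph{convolution} of two positive log-concave sequences and then to establish the defining quadratic inequality directly. Write $f(X)=\sum_i a_iX^i$ and $g(X)=\sum_j b_jX^j$, so that $f(X)g(X)=\sum_k c_kX^k$ with $c_k=\sum_{i+j=k}a_ib_j$. Nonnegativity of the product's coefficients is immediate, since each $c_k$ is a sum of products of nonnegative numbers. For log-concavity, I would first record the elementary observation that a nonnegative log-concave sequence has no \emph{internal zeros}: if $a_i=0$ then $a_{i-1}a_{i+1}\le a_i^2=0$ forces $a_{i-1}=0$ or $a_{i+1}=0$, so the support is an interval. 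Factoring out the appropriate powers of $X$ (a shift of indices, which preserves both nonnegativity and the log-concavity inequalities), I may therefore assume $a_i>0$ for $0\le i\le m$ and $b_j>0$ for $0\le j\le n$. The goal then reduces to proving $c_k^2\ge c_{k-1}c_{k+1}$ for every $k$.

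The key reformulation I would use is that log-concavity of a positive sequence is equivalent to the statement that the \emph{more central} product is larger: if $p+q=p'+q'$ and the pair $\{p',q'\}$ is nested inside $\{p,q\}$ (that is, $p\le p'\le q'\le q$), then $a_pa_q\le a_{p'}a_{q'}$, and likewise for $b$. This follows by telescoping the ratios $a_t/a_{t-1}$, which are non-increasing exactly when the sequence is log-concave. Next I would expand both sides as sums over quadruples: $c_k^2=\sum a_{i_1}b_{j_1}a_{i_2}b_{j_2}$ over $i_1+j_1=i_2+j_2=k$, while $c_{k-1}c_{k+1}$ is the analogous sum over $i_1+j_1=k-1$ and $i_2+j_2=k+1$. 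Since the two pairs in a $c_{k-1}c_{k+1}$-quadruple have unequal sums $k-1<k+1$, one cannot have both $i_1\ge i_2$ and $j_1\ge j_2$; hence \emph{at least one} of the two unit-balancing moves --- pushing an $a$-index inward, $(i_1,i_2)\mapsto(i_1+1,i_2-1)$, or pushing a $b$-index inward, $(j_1,j_2)\mapsto(j_1+1,j_2-1)$ --- sends the quadruple to a legitimate $c_k^2$-quadruple of \emph{weakly larger weight}, by the centrality reformulation. The plan is to use these moves to build a weight-nondecreasing injection from $c_{k-1}c_{k+1}$-quadruples into $c_k^2$-quadruples, which would give $c_{k-1}c_{k+1}\le c_k^2$.

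The hard part is \textbf{injectivity}: an image quadruple produced by the $a$-move can coincide with one produced by the $b$-move, and these collisions occur precisely in the boundary configurations where the two pairs are nearly balanced (so that the relevant weight ratios equal $1$). Resolving them requires a consistent tie-breaking rule together with a careful check of the equality cases and of the diagonal terms $i_1=i_2$. I expect this bookkeeping, rather than any single inequality, to be the real obstacle; indeed, symmetrizing over $a$ alone (or over $b$ alone) provably fails, so the argument must genuinely interleave the log-concavity of both sequences. A clean way to sidestep the ad hoc tie-breaking altogether is to phase the argument through total positivity: a positive sequence is log-concave (with no internal zeros) exactly when its Toeplitz matrix $(a_{j-i})_{i,j}$ is totally positive of order $2$, convolution of sequences corresponds to multiplication of Toeplitz matrices, and the Cauchy--Binet formula shows that a product of matrices all of whose $2\times2$ minors are nonnegative again has all $2\times2$ minors nonnegative. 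This immediately yields the log-concavity of $(c_k)$, and hence of $f(X)g(X)$.
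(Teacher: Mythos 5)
The paper itself offers no proof of this theorem --- it is quoted from Stanley --- so your proposal stands or falls on its own, and it has one genuine gap, located exactly where you placed the least weight: the claim that a nonnegative log-concave sequence has no internal zeros. Your argument shows only that \(a_i=0\) forces \(a_{i-1}=0\) \emph{or} \(a_{i+1}=0\), and that disjunction does not make the support an interval. The sequence \((1,0,0,1)\), i.e.\ the polynomial \(1+X^3\), is nonnegative and satisfies \(a_k^2\ge a_{k-1}a_{k+1}\) for all interior \(k\) (every product involved is \(0\)), yet it has internal zeros. Worse, this is not a repairable slip in your write-up: with the definition of log-concavity used in the paper, the theorem as stated is actually false. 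Take \(f(X)=1+X\) and \(g(X)=1+X^3\); both are log-concave with nonnegative coefficients, but \(f(X)g(X)=1+X+X^3+X^4\) has coefficient sequence \((1,1,0,1,1)\), and \(c_2^2=0<1=c_1c_3\). Stanley's actual theorem carries the additional hypothesis that \(f\) and \(g\) have \emph{no internal zeros}, and that hypothesis must be assumed --- it cannot be derived, as you attempt, from log-concavity and nonnegativity. (The paper's applications are unaffected: the polynomials \([m]_X\) and their products have strictly positive coefficients across their full degree range.)

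Once the no-internal-zeros hypothesis is added to the statement, the second half of your proposal is a complete and correct proof, and you should discard the injection sketch entirely: as you yourself concede, the tie-breaking needed for injectivity is the hard part and you never resolve it, so that half is not a proof. The total-positivity route, by contrast, closes: for a nonnegative sequence supported on an interval, the adjacent inequalities \(a_k^2\ge a_{k-1}a_{k+1}\) do telescope to the nested-pairs inequality \(a_pa_q\le a_{p'}a_{q'}\) for \(p\le p'\le q'\le q\), \(p+q=p'+q'\) (this telescoping is precisely where interval support is used, and precisely where \((1,0,0,1)\) breaks down); this is equivalent to nonnegativity of all \(2\times 2\) minors of the Toeplitz matrix \((a_{j-i})\); convolution of sequences is multiplication of Toeplitz matrices; and Cauchy--Binet shows nonnegativity of \(2\times 2\) minors is preserved under matrix products. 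Adding the observation that the sumset of two intervals is an interval (so the corrected hypothesis is inherited by the product, allowing iteration to \([k]_X!\)), the repaired statement and your Toeplitz/Cauchy--Binet argument together give exactly what the paper needs.
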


	\subsection{Gaussian polynomials}\label{subsec:Gaussian}
	
	For any \(k\in\mb{N}\), define the polynomials
	\[
	[k]_X=\begin{cases}
	\sum_{j=0}^{k-1}X^j=\dfrac{X^k-1}{X-1},&k\ge 1\\
	0,&k=0
	\end{cases}\quad\tx{and}\quad[k]_X!=\begin{cases}
	\prod_{m=1}^{k-1}[m]_X,&k\ge1\\
	1,&k=0
	\end{cases}
	\]
	Note that for any \(m\in\mb{N}\), the polynomial \([m]_X\) is trivially log-concave and has nonnegative coefficients.  So by Theorem~\ref{thm:stanley-log-conc}, for any \(k\in\mb{N}\), the polynomial \([k]_X!\) is log-concave and has nonnegative coefficients.  Thus by Proposition~\ref{prop:log-conc--unimodal}, \([k]_X!\) is unimodal, for every \(k\in\mb{N}\).
	
	For any \(n\in\mb{N},\,0\le k\le n\), define the \tsf{Gaussian polynomial} as
	\[
	\sqbinom{n}{k}_X=\frac{[n]_X!}{[k]_X![n-k]_X!}.
	\]
	Clearly the sequence \((\sqbinom{n}{k}_1)_{k=0}^n\) is unimodal, since these are the binomial coefficients.  Stanley~\cite{stanley1989log} proves that the sequence \((\sqbinom{n}{k}_q)_{k=0}^n\) is unimodal, for all \(q\ge 1\).  It is also easy to check that \((\sqbinom{n}{k}_q)_{k=0}^n\) is also log-concave, for all \(q\ge 1\).
	
	The more difficult result to prove is that the Gaussian polynomial \(\sqbinom{n}{k}_X\) is unimodal, for \(0\le k\le n,\,n\in\mb{N}\).  This was proved via algebraic techniques by Stanley~\cite{stanley1989log}.  Further, subverting expectations, the polynomials \(\sqbinom{n}{k}_X,\,0\le k\le n,\,n\in\mb{N}\) need not be log-concave.  For example, \(\sqbinom{4}{2}_X=1+X+2X^2+X^3+X^4\) is not log-concave.  We will be revisiting the unimodality of the Gaussian polynomials in Section~\ref{sec:Gaussian}.

	\subsection{Unimodality and \(\gamma\)-nonnegativity}
	
	A finite sequence \((a_k)_{k=0}^n\) of real numbers is said to be \tsf{palindromic with center \(n/2\)} if \(a_k=a_{n-k}\), for \(0\le k\le n\).  Further, we say a polynomial \(f(X)=a_0+a_1X+\cdots+a_nX^n\in\mb{R}[X]\) is \tsf{palindromic with center \(n/2\)} if \((a_k)_{k=0}^n\) is palindromic with center \(n/2\).  The following observation is immediate.
	\begin{observation}
		A polynomial \(f(X)\in\mb{R}[X]\) is palindromic with center \(n/2\) if and only if \(X^nf(1/X)=f(X)\).
	\end{observation}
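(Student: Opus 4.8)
The plan is to prove both directions at once by directly computing the polynomial $X^n f(1/X)$ and comparing its coefficients with those of $f(X)$, invoking the fact that two polynomials in $\mb{R}[X]$ are equal if and only if all their coefficients agree. This reduces the stated equivalence to a purely formal manipulation of a finite sum, so no analytic or structural input is needed.

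First I would write $f(X)=\sum_{k=0}^n a_k X^k$ and substitute $1/X$ for $X$, obtaining $f(1/X)=\sum_{k=0}^n a_k X^{-k}$. Multiplying through by $X^n$ and using $X^n\cdot X^{-k}=X^{n-k}$ gives
\[
X^n f(1/X)=\sum_{k=0}^n a_k X^{n-k}.
\]
The key step is to reindex this sum by the substitution $j=n-k$: as $k$ runs over $\{0,1,\ldots,n\}$, so does $j$, and the expression becomes $\sum_{j=0}^n a_{n-j} X^{j}$. Thus the coefficient of $X^{j}$ in $X^n f(1/X)$ is exactly $a_{n-j}$.

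Finally I would compare with $f(X)=\sum_{j=0}^n a_j X^{j}$. By the identity theorem for polynomials, $X^n f(1/X)=f(X)$ holds if and only if $a_{n-j}=a_j$ for every $j\in\{0,1,\ldots,n\}$, which is precisely the defining condition for $f$ to be palindromic with center $n/2$. This establishes the equivalence in both directions simultaneously.

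There is no genuine obstacle here; the only point requiring mild care is the bookkeeping in the index reversal $j=n-k$, ensuring the range of summation is preserved and that the coefficient comparison is set up correctly. Everything else is a direct consequence of the definition of palindromicity together with equality of polynomials coefficientwise.
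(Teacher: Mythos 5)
Your proof is correct and takes essentially the same approach as the paper: both hinge on the computation \(X^n f(1/X)=\sum_k a_k X^{n-k}=\sum_j a_{n-j}X^j\) followed by coefficientwise comparison with \(f(X)\). The only cosmetic difference is that you handle both directions in one pass via the reindexing, whereas the paper writes out the two implications separately.
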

	\begin{proof}
		Let \(f(X)=a_0+a_1X+\cdots+a_nX^n\in\mb{R}[X]\).  Suppose \(f(X)\) is palindromic with center \(n/2\).  So \(a_k=a_{n-k}\), for \(0\le k\le n\).  We then have
		\[
		X^nf\bigg(\frac{1}{X}\bigg)=X^n\sum_{k=0}^n\frac{a_k}{X^k}=\sum_{k=0}^na_kX^{n-k}=\sum_{k=0}^na_{n-k}X^{n-k}=\sum_{k=0}^na_kX^k=f(X).
		\]
		Conversely suppose \(X^nf(1/X)=f(X)\).  So we have
		\[
		X^nf\bigg(\frac{1}{X}\bigg)=X^n\sum_{k=0}^n\frac{a_k}{X^k}=\sum_{k=0}^na_kX^{n-k}=\sum_{k=0}^na_{n-k}X^k\quad\tx{and}\quad f(X)=\sum_{k=0}^na_kX^k.
		\]
		Comparing coefficients, we get \(a_k=a_{n-k}\), for \(0\le k\le n\), that is, \(f(X)\) is palindromic with center \(n/2\).
	\end{proof}
	The following is an interesting characterization of palindromic polynomials.
	\begin{observation}\label{thm:gamma-basis}
		For any \(n\in\mb{N}\), a polynomial \(f(X)\in\mb{R}[X]\) is palindromic with center \(n/2\) if and only if there exist unique \(\gamma_k\in\mb{R},\,0\le k\le\lfloor n/2\rfloor\) such that
		\[
		f(X)=\sum_{k=0}^{\lfloor n/2\rfloor}\gamma_kX^k(1+X)^{n-2k}.
		\]
	\end{observation}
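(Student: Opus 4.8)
The plan is to read the statement as the assertion that the family \(\{X^k(1+X)^{n-2k}:0\le k\le\lfloor n/2\rfloor\}\) is a \emph{basis} of the real vector space \(V_n\) of palindromic polynomials with center \(n/2\) and degree at most \(n\). Indeed, existence of the \(\gamma_k\) is the spanning property and their uniqueness is linear independence, so it suffices to exhibit a linearly independent family of the correct cardinality inside \(V_n\).

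First I would determine \(\dim V_n\). Writing \(f(X)=\sum_{k=0}^n a_kX^k\), membership in \(V_n\) is exactly the system of constraints \(a_k=a_{n-k}\) for \(0\le k\le n\). These pair up the coefficients, leaving \(a_0,\ldots,a_{\lfloor n/2\rfloor}\) as free parameters that determine all the others; hence \(\dim V_n=\lfloor n/2\rfloor+1\), regardless of the parity of \(n\).

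Next I would verify that each candidate \(\eta_k(X)\coloneqq X^k(1+X)^{n-2k}\) lies in \(V_n\). Since \(0\le k\le\lfloor n/2\rfloor\) guarantees \(n-2k\ge0\), each \(\eta_k\) is a genuine polynomial of degree \(n-k\le n\). The quickest way to confirm palindromicity is via the criterion \(X^nf(1/X)=f(X)\) from the preceding observation: substituting and simplifying \((1+1/X)^{n-2k}\) yields \(X^n\eta_k(1/X)=\eta_k(X)\), so \(\eta_k\in V_n\).

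Finally I would prove that \(\eta_0,\ldots,\eta_{\lfloor n/2\rfloor}\) are linearly independent; as there are exactly \(\lfloor n/2\rfloor+1\) of them, matching \(\dim V_n\), they then form a basis, which delivers both the existence and the uniqueness of the \(\gamma_k\). The decisive structural fact is that the lowest-degree term of \(\eta_k\) is \(X^k\), with coefficient \(1\). Thus in a relation \(\sum_k\gamma_k\eta_k=0\), if the coefficients were not all zero I could take the least index \(k_0\) with \(\gamma_{k_0}\ne0\) and extract the coefficient of \(X^{k_0}\): every \(\eta_k\) with \(k>k_0\) starts at \(X^k\) and so contributes nothing, forcing \(\gamma_{k_0}=0\), a contradiction. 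I expect this unitriangularity to be the only point demanding genuine care — once the low-order terms are seen to align, the rest is bookkeeping; alternatively one could avoid the dimension count entirely and build the \(\gamma_k\) recursively by peeling off \(\gamma_0(1+X)^n\) and inducting, but the basis argument is cleaner.
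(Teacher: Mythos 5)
Your proposal is correct, and there is in fact nothing in the paper to compare it against: the paper states this observation without any proof, treating it as immediate, so your argument supplies exactly the justification that was omitted. Your route is the natural one and is complete in both directions of the equivalence: writing \(\eta_k(X)=X^k(1+X)^{n-2k}\), the dimension count \(\dim V_n=\lfloor n/2\rfloor+1\) (the constraints \(a_k=a_{n-k}\) pair up coefficients, leaving \(a_0,\ldots,a_{\lfloor n/2\rfloor}\) free) together with the membership check \(X^n\eta_k(1/X)=\eta_k(X)\) and the unitriangularity of lowest-order terms shows that \(\{\eta_k\}\) is a basis of \(V_n\); spanning gives existence, independence gives uniqueness, and the membership of each \(\eta_k\) in \(V_n\) gives the converse implication (any such combination is palindromic), which some write-ups forget to address. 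Two minor remarks: the triangularity is equally visible at the top end, since \(\deg\eta_k=n-k\) are pairwise distinct with leading coefficient \(1\), so independence is immediate from degrees alone; and your alternative recursive scheme is also viable and fully elementary --- set \(\gamma_0=a_0\), observe that \(f(X)-\gamma_0(1+X)^n\) is palindromic with vanishing constant and top coefficients, hence equals \(X\,g(X)\) with \(g\) palindromic with center \((n-2)/2\), and induct --- which is the argument one would choose if one wished to avoid invoking linear algebra at all.
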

	If \(f(X)\in\mb{R}[X]\) is palindromic with center \(n/2\) and the coefficients \(\gamma_k,\,0\le k\le\lfloor n/2\rfloor\) are as in Observation~\ref{thm:gamma-basis}, define \(\gamma^n(f)=(\gamma_k)_{k=0}^{\lfloor n/2\rfloor}\).  If \(\gamma^n(f)\) is nonnegative, we say that \(f(X)\) is \tsf{\(\gamma\)-nonnegative}.  An important result of Br\"and\'en~\cite{branden2008actions} is as follows.
	\begin{theorem}[Br\"and\'en~\cite{branden2008actions}]\label{thm:real-root--gamma-nonneg}
		If \(f(X)\in\mb{R}[X]\) is real-rooted with nonnegative coefficients, then \(f(X)\) is \(\gamma\)-nonnegative.
	\end{theorem}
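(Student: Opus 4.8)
The plan is to prove the statement by factoring $f$ completely over $\mb{R}$ and showing that each irreducible factor contributes $\gamma$-nonnegatively, exploiting the fact that the $\gamma$-basis $\{X^k(1+X)^{n-2k}\}$ behaves multiplicatively. Since $\gamma$-nonnegativity is only defined (via the unique expansion in Observation~\ref{thm:gamma-basis}) for palindromic polynomials, I read the hypothesis as including that $f$ is palindromic with center $n/2$; this is the standard setting for Br\"and\'en's theorem.

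First I would locate the roots. Because $f$ has nonnegative coefficients, $f(x)>0$ for every $x>0$, so $f$ has no positive real root, and real-rootedness then forces every root to be $\le 0$. Palindromy gives $X^nf(1/X)=f(X)$, hence $a_0=a_n$; as $a_n>0$ (it is the nonzero, nonnegative leading coefficient) we get $f(0)=a_0>0$, so $0$ is not a root, and every root is therefore a strictly negative real. The same identity $X^nf(1/X)=f(X)$ shows the multiset of roots is invariant under $r\mapsto 1/r$: comparing $f(X)=a_n\prod_i(X-r_i)$ with $a_n\prod_i(-r_i)(X-1/r_i)$, the roots pair up as $r\leftrightarrow 1/r$ with equal multiplicities. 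The only self-paired root is $-1$, occurring with some multiplicity $\mu$, while every other negative root $r$ is paired with its distinct reciprocal $1/r$.

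This produces the factorization
\[
f(X)=a_n(1+X)^{\mu}\prod_j\left(X^2+s_jX+1\right),
\]
where each quadratic comes from a reciprocal pair $\{r,1/r\}$ with $r<0$ and $r\neq -1$, so that $s_j=-(r+1/r)=(-r)+(-1/r)>2$ by AM--GM applied to the positive reals $-r$ and $-1/r$ (whose product is $1$), the inequality being strict since $-r\neq 1$. The crucial algebraic identity is then
\[
X^2+sX+1=(1+X)^2+(s-2)X,
\]
which, for $s>2$, exhibits each quadratic factor as a palindromic polynomial of center $1$ with $\gamma_0=1$ and $\gamma_1=s-2>0$, hence $\gamma$-nonnegative. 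The factor $(1+X)^{\mu}=1\cdot X^0(1+X)^{\mu}$ is $\gamma$-nonnegative of center $\mu/2$ by inspection, and the scalar $a_n>0$ does no harm.

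It remains to glue the factors, for which I would record the multiplicativity of $\gamma$-nonnegativity: if $p=\sum_i\alpha_iX^i(1+X)^{m-2i}$ and $q=\sum_j\beta_jX^j(1+X)^{\ell-2j}$ with all $\alpha_i,\beta_j\ge 0$, then
\[
pq=\sum_k\left(\sum_{i+j=k}\alpha_i\beta_j\right)X^k(1+X)^{(m+\ell)-2k},
\]
so $pq$ is again $\gamma$-nonnegative, now of center $(m+\ell)/2$, since each bracketed coefficient is a sum of products of nonnegative reals. Applying this repeatedly to the factorization above (whose centers $\mu/2$ and $1$ sum to $n/2$, matching $\deg f=n$) shows $f$ has a $\gamma$-expansion with nonnegative coefficients; by the uniqueness in Observation~\ref{thm:gamma-basis} this \emph{is} its $\gamma$-expansion, so $f$ is $\gamma$-nonnegative. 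I expect the main obstacle to be the root-location step --- carefully arguing that real-rootedness together with nonnegative coefficients and palindromy forces all roots to be strictly negative and to occur in reciprocal pairs --- since everything after that reduces to the clean identity $X^2+sX+1=(1+X)^2+(s-2)X$ and the bookkeeping of multiplicativity.
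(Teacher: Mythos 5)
The paper itself contains no proof of Theorem~\ref{thm:real-root--gamma-nonneg}: it is quoted from Br\"and\'en~\cite{branden2008actions} as a known result, so there is no internal argument to compare against. Your proposal supplies a complete, self-contained elementary proof, and it is the standard one for this statement. Your reading of the hypothesis (that \(f\) must be palindromic with center \(n/2\), since otherwise \(\gamma\)-nonnegativity is undefined) is the right one. The individual steps all check out: nonnegative coefficients exclude positive roots, so real-rootedness puts all roots on the negative axis; \(X^nf(1/X)=f(X)\) forces the multiset of roots to be closed under \(r\mapsto 1/r\), with \(-1\) the only fixed point; this yields \(f(X)=a_n(1+X)^{\mu}\prod_j(X^2+s_jX+1)\) with each \(s_j>2\) by AM--GM (note only \(s_j\ge 2\) is actually needed); the identity \(X^2+sX+1=(1+X)^2+(s-2)X\) makes each quadratic factor \(\gamma\)-nonnegative of center \(1\); your multiplicativity computation for the basis \(X^k(1+X)^{m-2k}\) is correct and the centers add to \(n/2\); and the uniqueness clause of Observation~\ref{thm:gamma-basis} legitimately identifies the constructed expansion as \emph{the} \(\gamma\)-expansion. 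This gives the paper something it currently lacks: a short proof of the theorem it only cites.

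One small completeness caveat. You assert \(a_n>0\) ("the nonzero leading coefficient") and use \(a_0=a_n>0\) to exclude \(0\) as a root, which tacitly assumes \(\deg f=n\). Under the paper's definitions, \(f\) can be palindromic with center \(n/2\) while \(\deg f=d<n\) (e.g.\ \(X(1+X)^2\) with center \(2\)); then \(a_n=0\), \(0\) is a root, and your root-location step fails as written. The patch is routine: palindromy forces \(f=X^{n-d}g(X)\) with \(g(0)\ne 0\), \(g\) palindromic with center \((2d-n)/2\), real-rooted with nonnegative coefficients; your argument applies to \(g\), and since
\[
X^{n-d}\,X^{k}(1+X)^{(2d-n)-2k}=X^{k+n-d}(1+X)^{n-2(k+n-d)},
\]
the \(\gamma\)-vector of \(f\) is that of \(g\) prefixed by \(n-d\) zeros, hence nonnegative. (Similarly, \(f(x)>0\) for \(x>0\) needs \(f\not\equiv 0\), but the zero polynomial is trivially \(\gamma\)-nonnegative.) With that one-line reduction the proof is complete.
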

	The following result relates \(\gamma\)-nonnegativity with unimodality.
	\begin{theorem}[Petersen~\cite{petersen2015eulerian}]\label{thm:petersen}
		Every \(\gamma\)-nonnegative polynomial is unimodal.
	\end{theorem}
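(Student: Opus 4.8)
The plan is to use the expansion from Observation~\ref{thm:gamma-basis} directly, reducing the claim to two ingredients: that each building block $X^k(1+X)^{n-2k}$ is both unimodal and palindromic with center $n/2$, and that a nonnegative linear combination of unimodal palindromic polynomials sharing the same center stays unimodal. The technical heart is a structural reformulation of unimodality for palindromic sequences. I would first establish that if $(a_k)_{k=0}^n$ is palindromic with center $n/2$, then it is unimodal if and only if its first half is nondecreasing, that is,
\[
a_0 \le a_1 \le \cdots \le a_{\lfloor n/2\rfloor}.
\]
One direction is immediate: palindromy turns a nondecreasing first half into a nonincreasing second half, so the sequence rises to the center and then falls. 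For the converse, a palindromic unimodal sequence attains its maximum in a block symmetric about $n/2$, which forces the first half to be nondecreasing.

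Next I would analyze a single term. Expanding via the binomial theorem gives
\[
X^k(1+X)^{n-2k} = \sum_{j=0}^{n-2k}\binom{n-2k}{j}X^{k+j},
\]
so its coefficient sequence is precisely the binomial coefficients $\binom{n-2k}{j}$ supported on the exponents $k, k+1, \ldots, n-k$. These are symmetric about the exponent $n/2$, hence palindromic with center $n/2$, and they are unimodal, so by the structural reformulation their first half (up to the center) is nondecreasing.

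Finally, writing $f(X)=\sum_{k=0}^{\lfloor n/2\rfloor}\gamma_k X^k(1+X)^{n-2k}$ with all $\gamma_k\ge 0$, I would argue by linearity: each summand is palindromic with center $n/2$ and has nondecreasing first half, and multiplying by $\gamma_k\ge 0$ preserves both properties. Summing then preserves them as well, since a sum of palindromic sequences with the same center is palindromic and a sum of nondecreasing sequences is nondecreasing. Hence $f$ is palindromic with center $n/2$ and has nondecreasing first half, so by the structural reformulation $f$ is unimodal. The main obstacle I anticipate is the clean proof of that reformulation, specifically arguing that a palindromic unimodal sequence must peak at the center: a priori the peak index $T$ in the definition of unimodality need not equal $\lfloor n/2\rfloor$, and one must rule out an asymmetric peak using palindromy. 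Once this equivalence is in hand, the remainder is a routine linearity argument.
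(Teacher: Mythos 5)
Your proposal is correct, and in fact it supplies a proof that the paper itself omits: the paper only states this theorem with a citation to Petersen's book and never proves it. Your argument is essentially the standard one. The decomposition into the terms \(\gamma_k X^k(1+X)^{n-2k}\), the observation that each term is palindromic with center \(n/2\) and unimodal, and the closure of this class under nonnegative linear combinations is precisely the mechanism the paper later formalizes (in the darga language) as Observation~\ref{obs:Gaussian-prop}(a), so your proof also fits naturally with the paper's toolkit. The one step you rightly flag as delicate --- that a palindromic unimodal sequence must have a nondecreasing first half --- does go through cleanly: if \(a_i > a_{i+1}\) for some \(i < \lfloor n/2\rfloor\), then unimodality forces the sequence to be nonincreasing from index \(i\) onward, and since \(n-i \ge i+1\) this gives \(a_{i+1} \ge a_{n-i} = a_i\) by palindromy, a contradiction. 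With that lemma in place, your reduction of each term to padded binomial coefficients (nondecreasing up to the center) and the linearity step are both routine and correct. One small remark: for the final sum you do not even need the full equivalence, only the easy direction (nondecreasing first half plus palindromy implies unimodal) applied to \(f\) itself, together with the nontrivial direction applied to each basis term --- and for the basis terms you can bypass the lemma entirely, since the explicit binomial expansion already exhibits the nondecreasing first half directly.
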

	
	For any \(n\in\mb{Z}^+,\,\pi\in\mf{S}_n\), define the \tsf{descent statistic} as \(\des(\pi)=|\{i\in[n]:\pi_i>\pi_{i+1}\}|\).  The polynomial
	\[
	A_n(X)=\sum_{\pi\in\mf{S}_n}X^{\des(\pi)},
	\]
	is called the \tsf{Eulerian polynomial}.  We can write \(A_n(X)=\sum_{k=0}^{n-1}A_{n,k}X^k\), where \(A_{n,k}\coloneqq|\{\pi\in\mf{S}_n:\des(\pi)=k\}|,\,1\le k\le n-1\).  So \(A_n(X)\) has nonnegative coefficients.  It can be checked that \(A_n(X)\) is palindromic.  Further, it is a classical result of Fr\"obenius that \(A_n(X)\) is real-rooted.  Thus \(A_n(X)\) is \(\gamma\)-nonnegative.  So by Theorem~\ref{thm:petersen}, \(A_n(X)\) is unimodal.

	\section{More about unimodality}\label{sec:unimodality-more}
	
	In this section, we will cover some more results about unimodality of polynomials.
	
	\subsection{A test of unimodality}
	
	We look at a simple test of unimodality by Boros and Moll~\cite{boros1999unimodality}, and some consequences.
	
	Consider a polynomial \(f(X)=a_0+a_1X+\cdots+a_nX^n\in\mb{R}[X]\).  We say \(f(X)\) has \tsf{has nondecreasing coefficients} if \(a_0\le a_1\le\cdots\le a_n\).  If \(f(X)\) is unimodal, we define the \tsf{mode} of \(f(X)\) to be the least \(k_0\in\{0,\ldots,n\}\) satisfying \(a_0\le a_1\le\cdots\le a_{k_0}\ge a_{k_0+1}\ge\cdots\ge a_n\).  The following observations are immediate and easy to check.
	\begin{observation}\label{obs:unimodal-prop}
		Let \(f(X),g(X)\in\mb{R}[X]\) have nonnegative coefficients.
		\begin{enumerate}[(a)]
			\item  \(f(X)\) is unimodal with mode \(k_0\) if and only if \(Xf(X)\) is unimodal with mode \(k_0+1\).
			\item  If \(f(X),g(X)\) are unimodal with the same mode \(k_0\), then for any \(\alpha,\beta\ge0\), \(\alpha f(X)+\beta g(X)\) is unimodal.
		\end{enumerate}
	\end{observation}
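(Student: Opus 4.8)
The plan is to handle both parts by direct manipulation of coefficient sequences, since each assertion is essentially a bookkeeping statement about how the defining unimodal inequality behaves under the stated operations.

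For part (a), I would write $f(X)=a_0+a_1X+\cdots+a_nX^n$, so that $Xf(X)=b_0+b_1X+\cdots+b_{n+1}X^{n+1}$ with $b_0=0$ and $b_k=a_{k-1}$ for $1\le k\le n+1$. The key observation is that multiplication by $X$ is simply a right-shift of the coefficient sequence that prepends a zero, and that this shift is an order-isomorphism on indices: the comparison $b_k\le b_{k+1}$ is literally the comparison $a_{k-1}\le a_k$. Because $f$ has nonnegative coefficients, $b_0=0\le a_0=b_1$, so the prepended zero never disturbs the nondecreasing initial run. Thus for any $m\ge1$, the chain $b_0\le\cdots\le b_m\ge\cdots\ge b_{n+1}$ is equivalent to $a_0\le\cdots\le a_{m-1}\ge\cdots\ge a_n$. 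Taking least valid indices on both sides shows that the mode of $f$ equals $k_0$ if and only if the mode of $Xf$ equals $k_0+1$, which is exactly the claim. The only degenerate case to note is $f=0$, where everything is vacuous; for $f\ne0$ one checks that $m=0$ can never be the mode of $Xf$ since that would force $a_0\le0$.

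For part (b), I would pad the shorter polynomial with trailing zero coefficients so that $f$ and $g$ share one coefficient-sequence length, writing $f(X)=\sum a_kX^k$ and $g(X)=\sum c_kX^k$; trailing zeros are harmless, since a nonincreasing nonnegative tail is unaffected and the least peak index is unchanged, so ``same mode $k_0$'' remains meaningful after padding. By hypothesis both have mode $k_0$, so for $1\le k\le k_0$ we have $a_{k-1}\le a_k$ and $c_{k-1}\le c_k$ simultaneously, while for $k>k_0$ both inequalities reverse. Since $\alpha,\beta\ge0$, these inequalities are preserved under the combination $\alpha a_k+\beta c_k$: the coefficient sequence of $\alpha f+\beta g$ is nondecreasing up to index $k_0$ and nonincreasing thereafter. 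Hence $k_0$ witnesses that $\alpha f+\beta g$ is unimodal. Its actual mode may be strictly smaller than $k_0$ if the combined slopes create an earlier plateau, but this does not affect the conclusion, which only asserts unimodality.

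There is no genuinely hard step here; both parts are routine once the coefficient sequences are written out. If I had to flag the one place demanding care, it is the ``least index'' clause in the definition of mode used in part (a): one must check not merely that $Xf$ is unimodal but that its smallest valid peak index is exactly $k_0+1$, which is why establishing the index-by-index \emph{equivalence} of the two unimodal chains (rather than a one-directional implication) is the crux of that argument. Part (b) carries only the mild subtlety that a common index range must be arranged by zero-padding before the coefficient-wise inequalities can be combined.
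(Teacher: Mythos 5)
Your proof is correct. The paper offers no proof of this observation at all --- it is dismissed as ``immediate and easy to check'' --- and your coefficient-sequence verification (the index shift with $b_0=0\le a_0$ for part (a), and the termwise combination of inequalities after zero-padding for part (b)) is exactly the routine check the paper intends, with the right attention paid to the two genuine subtleties: that nonnegativity is what rules out mode $0$ for $Xf(X)$, and that part (b) only claims unimodality, not preservation of the mode.
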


	\begin{theorem}[Boros, Moll~\cite{boros1999unimodality}]\label{thm:unimodality-test}
		If \(f(X)\in\mb{R}[X]\) has nonnegative nondecreasing coefficients, then \(f(X+1)\) is unimodal.
	\end{theorem}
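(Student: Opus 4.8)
The plan is to induct on the degree $n$ of $f$, writing $f(X)=a_0+a_1X+\cdots+a_nX^n$ with $0\le a_0\le a_1\le\cdots\le a_n$. The base cases $n=0,1$ are immediate. For the inductive step I would peel off the constant term by writing $f(X)=a_0+X\,h(X)$, where $h(X)=a_1+a_2X+\cdots+a_nX^{n-1}$ again has nonnegative nondecreasing coefficients and has degree $n-1$. Substituting $X\mapsto X+1$ gives the decomposition
\[
f(X+1)=a_0+(X+1)\,h(X+1),
\]
and by the inductive hypothesis $h(X+1)$ is unimodal (and visibly has nonnegative coefficients). Thus it suffices to understand how the two operations ``multiply by $(X+1)$'' and ``add the constant $a_0$'' act on a unimodal polynomial with nonnegative coefficients.

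The first key step is to show that if $p(X)=\sum_i p_iX^i$ has nonnegative coefficients and is unimodal, then $q(X)\coloneqq(1+X)p(X)$ is unimodal. Observation~\ref{obs:unimodal-prop}(a) tells us that $Xp$ is unimodal with mode one larger than that of $p$, so $q=p+Xp$ is a sum of two unimodal polynomials whose modes differ by $1$; since the modes are \emph{not} equal, Observation~\ref{obs:unimodal-prop}(b) does not apply directly. Instead I would argue via consecutive differences: from $q_i=p_i+p_{i-1}$ we get $q_i-q_{i-1}=p_i-p_{i-2}$. If $m$ denotes the mode of $p$, then $p_i-p_{i-2}\ge 0$ for $i\le m$ (all indices lie in the nondecreasing part) and $p_i-p_{i-2}\le 0$ for $i\ge m+2$ (indices in the nonincreasing part), leaving only the single term at $i=m+1$ of indeterminate sign. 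Hence the difference sequence is nonnegative and then nonpositive with at most one sign change, so $q$ is unimodal; nonnegativity of $p$ is needed only to handle the boundary indices.

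The second step, which I expect to be the main obstacle, is the reintroduction of the constant. Setting $p=h(X+1)$ and $q=(1+X)\,h(X+1)$, the polynomial $f(X+1)=a_0+q$ is obtained from the unimodal polynomial $q$ by adding $a_0\ge 0$ to its constant term, and in general this can destroy unimodality by creating a spurious initial peak. The point is that $a_0$ is small relative to the first rise of $q$: since $q_0=p_0$ and $q_1=p_0+p_1$ we have $q_1-q_0=p_1$, and the coefficient of $X$ in $h(X+1)$ is $p_1=a_2+2a_3+\cdots+(n-1)a_n\ge a_2\ge a_0$ whenever $n\ge 2$. Therefore $q_0+a_0\le q_1$, while all higher coefficients are unchanged; prepending a term no larger than the next to a (suffix of a) unimodal sequence keeps it unimodal, so $f(X+1)$ is unimodal. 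Verifying the inequality $a_0\le p_1$ — that the reintroduced constant never overshoots the first rise of $q$ — is precisely where the hypothesis that the coefficients are \emph{nondecreasing} (hence $a_0\le a_2$) is essential, and it is what makes the induction close.
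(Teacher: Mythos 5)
Your proof is correct, but it takes a genuinely different route from the paper's. The paper's argument is a basis decomposition: setting \(P_{n,r}(X)=(1+X)^{n+1}-(1+X)^r\), it shows by induction (via the recurrence \(P_{m+1,r}=P_{m,r}+X(1+X)^{m+1}\)) that all the \(P_{n,r}\), \(0\le r\le n\), are unimodal with the \emph{same} mode \(1+\lfloor n/2\rfloor\), and then writes
\[
Xf(X+1)=a_0P_{n,0}(X)+(a_1-a_0)P_{n,1}(X)+\cdots+(a_n-a_{n-1})P_{n,n}(X),
\]
so the nondecreasing hypothesis enters precisely as nonnegativity of the combination coefficients, and Observation~\ref{obs:unimodal-prop}(b) applies verbatim because all modes coincide. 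You instead induct on the degree through \(f(X+1)=a_0+(X+1)\,h(X+1)\), which forces you to supply two lemmas the paper never needs: first, that multiplication by \(1+X\) preserves unimodality of a polynomial with nonnegative coefficients — your difference identity \(q_i-q_{i-1}=p_i-p_{i-2}\), which has at most one sign change (nonnegative for \(i\le m\), nonpositive for \(i\ge m+2\), arbitrary only at \(i=m+1\)), is a correct and clean way to do this, and you rightly note that Observation~\ref{obs:unimodal-prop}(b) cannot be invoked here since \(p\) and \(Xp\) have different modes; second, that restoring the constant is harmless because \(a_0\le a_2\le p_1\), which is exactly where the nondecreasing hypothesis enters your argument (versus entering as \(a_r-a_{r-1}\ge0\) in the paper's). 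Both proofs are complete; the trade-off is that the paper's proof additionally pins down the mode of \(f(X+1)\) (namely \(\lfloor n/2\rfloor\)), which your induction does not track, while your proof is more self-contained — it avoids the auxiliary family \(P_{m,r}\) entirely, and your \((1+X)\)-multiplication lemma is a reusable fact of independent interest.
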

	\begin{proof}
		For \(0\le r\le m\), define
		\[
		P_{m,r}(X)=(1+X)^{m+1}-(1+X)^r.
		\]
		Then it is easy to check the following recurrence.
		\[
		P_{m+1,r}(X)=P_{m,r}(X)+X(1+X)^{m+1}.
		\]
		Using this recurrence and induction on \(m\ge r\), it follows that \(P_{m,r}(X)\) is unimodal with mode \(1+\lfloor m/2\rfloor\).  Now let \(f(X)=a_0+a_1X+\cdots+a_nX^n\in\mb{R}[X]\) have nonnegative nondecreasing coefficients.  We can then write
		\[
		Xf(X+1)=a_0P_{n,0}(X)+(a_1-a_0)P_{n,1}(X)+\cdots+(a_n-a_{n-1})P_{n,n}(X).
		\]
		So by Observation~\ref{obs:unimodal-prop} (b), \(Xf(X+1)\) is unimodal and then by Observation~\ref{obs:unimodal-prop} (a), \(f(X+1)\) is unimodal.
	\end{proof}
	\begin{example}
		Taking \(f(X)=X^n\) in Theorem~\ref{thm:unimodality-test} gives the unimodality of the binomial coefficients.
	\end{example}
	\begin{example}
		Fix any \(n,m\in\mb{Z}^+\).  Define, for \(0\le k\le m\),
		\[
		a_k=\sum_{j=k}^mn^j\binom{j}{k},\quad b_k=\sum_{j=k}^mj^n\binom{j}{k},\quad c_k=\sum_{j=k}^mj^j\binom{j}{k}.
		\]
		Further define the polynomials
		\[
		f(X)=\sum_{k=0}^ma_k(X-1)^k,\quad g(X)=\sum_{k=0}^mb_k(X-1)^k,\quad h(X)=\sum_{k=0}^mc_k(X-1)^k.
		\]
		Then it is easy to check that \(f(X),g(X),h(X)\) all have nonnegative nondecreasing coefficients, and so by Theorem~\ref{thm:unimodality-test}, \(f(X+1),g(X+1),h(X+1)\) are unimodal, that is, the sequences \((a_k)_{k=0}^m,(b_k)_{k=0}^m,(c_k)_{k=0}^m\) are unimodal.
	\end{example}
	\begin{example}
		Fix \(2r\) positive integers \(a_1,\ldots,a_r,n_1,\ldots,n_r\) such that \(2<a_1<\cdots<a_r\).  Define for any \(m\in\mb{Z}^+\),
		\[
		c_k=\sum_{j=k}^m\binom{a_1m}{j}^{n_1}\cdots\binom{a_rm}{j}^{n_r}\binom{j}{k}.
		\]
		Then one can check that the polynomial \(f(X)=\sum_{k=0}^mc_k(X-1)^k\) has nonnegative nondecreasing coefficients, thus implying by Theorem~\ref{thm:unimodality-test} that \((c_k)_{k=0}^m\) is unimodal.
	\end{example}
	
	\subsection{Unimodality, lattice paths and the reflection principle}
	
	Let us have a look at a promising technique to prove unimodality of combinatorial sequences, namely, reflection of lattice paths.  We follow the arguments and examples of Sagan~\cite{sagan1997unimodality}.
	
	A \tsf{lattice path} in \(\mb{Z}^2\) is any finite sequence \((v_k)_{k=0}^n\) in \(\mb{Z}^2\) such that \(\|v_{k+1}-v_k\|=1,\,0\le k\le n-1\), where \(\|\cdot\|\) denotes the Euclidean distance in \(\mb{R}^2\).  We clearly know the geometric meaning of a line in the plane.  Formally, a \tsf{line} in \(\mb{R}^2\) is any set of the form
	\[
	\ell_{a,b}\coloneqq\{ta+b:t\in\mb{R}\},\quad\tx{for some fixed }a,b\in\mb{R}^2,\,a\ne0.
	\]
	Given a line \(\ell_{a,b}\) in \(\mb{R}^2\) and any \(v\in\mb{R}^2\), the \tsf{reflection} of \(v\) about \(\ell_{a,b}\) is defined as \(r(\ell_{a,b},v)=2a-v\) (it does not depend on \(b\)).  We say the line \(\ell_{a,b}\) is \tsf{\(\mb{Z}^2\)-invariant} if \(r(\ell_{a,b},v)\in\mb{Z}^2\), for all \(v\in\mb{Z}^2\).
	
	Consider a \(\mb{Z}^2\)-invariant line \(\ell_{a,b}\) and a finite sequence \(\mbf{v}=(v_k)_{k=0}^n\) in \(\mb{Z}^2\) such that \(v_k\in\ell_{a,b}\), for some \(k\in\{0,\ldots,n\}\).  Let \(k_0=\max\{k\in\{0,\ldots,n\}:v_k\in\ell_{a,b}\}\).  We define the \tsf{reflection of \(\mbf{v}\) about \(\ell_{a,b}\)} as
	\[
	r(\ell_{a,b},\mbf{v})=(v_0,\ldots,v_{k_0},r(\ell_{a,b},v_{{k_0}+1}),\ldots,r(\ell_{a,b},v_n))=(v_0,\ldots,v_{k_0},2a-v_{k_0+1},\ldots,2a-v_n).
	\]
	
	\begin{example}
		For any \(n\in\mb{Z}^+,\,0\le k\le n\), let \(T_{n,k}\) be the set of all lattice paths \((v_j)_{j=0}^n\) satisfying \(v_0=(0,0),\,v_n=(k,n-k)\) and \(v_{j+1}-v_j\in\{(1,0),(0,1)\}\), for \(0\le j\le n-1\).  It is easy to see that \(|T_{n,k}|=\binom{n}{k}\).  We can now show the unimodality of the binomial coefficients using the reflection principle.  Fix any \(n\in\mb{Z}^+,\,k<\lfloor n/2\rfloor\).  Let \(v=(k,n-k),\,w=(k+1,n-k-1)\) and let \(\ell\) be the perpendicular bisector of the line segment \(\ol{vw}\).  Then we can check that \(\ell\) is \(\mb{Z}^2\) invariant and further, the map \(\phi:T_{n,k}\to T_{n,k+1}\) defined as
		\[
		\phi(\mbf{v})=r(\ell,\mbf{v}),\quad\tx{for all }\mbf{v}\in T_{n,k},
		\]
		is an injection.  This proves that \(\binom{n}{k}\le\binom{n}{k+1}\).
	\end{example}
	
	For any \((a,b)\in(\mb{Z}^+)^2,\,n\in\mb{Z}^+,\,n\equiv a+b\modulo{2}\), define \(F_{a,b}(n)\) to be the set of all lattice paths \((v_k)_{k=0}^n\) satisfying \(v_0=(0,0),\,v_n=(a,b)\) and \(v_{k+1}-v_k\in\{(\pm1,0),(0,\pm1)\}\), for \(0\le k\le n-1\).  DeTemple and Robertson~\cite{detemple1984equally} gave the following expression for \(|F_{a,b}(n)|\).
	\begin{theorem}[DeTemple, Robertson~\cite{detemple1984equally}]\label{thm:Fab}
		For any \((a,b)\in(\mb{Z}^+)^2,\,n\in\mb{Z}^+,\,n\equiv a+b\modulo{2}\),
		\[
		|F_{a,b}(n)|=\binom{n}{(n+a-b)/2}\binom{n}{(n-a-b)/2}.
		\]
	\end{theorem}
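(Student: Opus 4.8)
The plan is to decouple the two-dimensional walk into two independent one-dimensional sign walks by rotating the lattice by $45^\circ$. Concretely, I would introduce the injective linear map $T:\mb{Z}^2\to\mb{Z}^2$ given by $T(x,y)=(x+y,\,x-y)$, and, using its linearity, let it act on step vectors as well. A one-line check shows that the four admissible steps $(1,0),(-1,0),(0,1),(0,-1)$ are sent, respectively, to $(1,1),(-1,-1),(1,-1),(-1,1)$, so that $T$ restricts to a bijection between the step set $\{(\pm1,0),(0,\pm1)\}$ and the diagonal step set $\{\pm1\}\times\{\pm1\}$. This is the one genuine idea in the argument.

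First I would note that a path in $F_{a,b}(n)$ is determined uniquely by its sequence of steps $s_1,\dots,s_n\in\{(\pm1,0),(0,\pm1)\}$, since $v_0=(0,0)$ is fixed, and that the endpoint condition $v_n=(a,b)$ is simply $\sum_{k=1}^n s_k=(a,b)$. Applying $T$ and writing $T(s_k)=(\epsilon_k,\delta_k)\in\{\pm1\}^2$, the step bijection above shows that choosing the path is the same as choosing the two sign sequences $(\epsilon_k)_{k=1}^n$ and $(\delta_k)_{k=1}^n$ \emph{completely independently}. By linearity of $T$, the endpoint condition becomes $T(a,b)=(a+b,\,a-b)=\sum_k(\epsilon_k,\delta_k)$, i.e.
\[
\sum_{k=1}^n\epsilon_k=a+b\qquad\tx{and}\qquad\sum_{k=1}^n\delta_k=a-b,
\]
with no remaining coupling between the two coordinates.

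It then remains to count, and this is routine. For a fixed target sum $m$, solving $p-(n-p)=m$ for the number $p$ of $+1$'s gives $p=(n+m)/2$, so there are $\binom{n}{(n+m)/2}$ sign sequences in $\{\pm1\}^n$ with prescribed sum $m$; the hypothesis $n\equiv a+b\modulo{2}$ is exactly what makes $(n+a+b)/2$ and $(n+a-b)/2$ integers, and the convention $\binom{n}{j}=0$ for $j\notin\{0,\dots,n\}$ correctly handles the degenerate cases such as $a+b>n$, where $F_{a,b}(n)=\emptyset$. By independence the two counts multiply, yielding
\[
|F_{a,b}(n)|=\binom{n}{(n+a+b)/2}\binom{n}{(n+a-b)/2},
\]
and applying the symmetry $\binom{n}{j}=\binom{n}{n-j}$ to the first factor rewrites $\binom{n}{(n+a+b)/2}=\binom{n}{(n-a-b)/2}$, putting the count in the stated form.

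I do not expect a substantive obstacle here: once the change of coordinates is in place the calculations are pure bookkeeping. The only points I would be careful to verify are that the step map $s_k\mapsto(\epsilon_k,\delta_k)$ really is a bijection onto $\{\pm1\}^2$ (so the two directions genuinely decouple), and that the parity and range hypotheses are used correctly, since they are precisely what make the binomial coefficients well-defined and equal to zero in the unreachable cases.
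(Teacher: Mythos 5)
Your proof is correct. Note, however, that the paper does not actually prove this theorem---it is stated with a citation to DeTemple and Robertson and then used as a black box to deduce Theorem~\ref{thm:sagan-unimodal}---so there is no internal argument to compare yours against; your write-up supplies a proof where the paper has only a reference. Your route is the classical one for this identity: conjugating by the injective linear map \(T(x,y)=(x+y,\,x-y)\) turns the step set \(\{(\pm1,0),(0,\pm1)\}\) bijectively into \(\{\pm1\}\times\{\pm1\}\), so the walk decouples into two independent \(\pm1\)-walks with prescribed sums \(a+b\) and \(a-b\); the count \(\binom{n}{(n+m)/2}\) of sign sequences with sum \(m\), the use of the parity hypothesis \(n\equiv a+b\modulo{2}\) for integrality, the vanishing convention handling unreachable endpoints, and the final symmetry \(\binom{n}{(n+a+b)/2}=\binom{n}{(n-a-b)/2}\) are all handled correctly. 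The one point worth making explicit is that the backward implication in the endpoint condition (that \(\sum_k T(s_k)=T(a,b)\) forces \(\sum_k s_k=(a,b)\)) uses the injectivity of \(T\), which you do state, so nothing is missing.
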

	The following result is now an easy consequence of Theorem~\ref{thm:Fab} and the reflection principle.
	\begin{theorem}[Sagan~\cite{sagan1997unimodality}]\label{thm:sagan-unimodal}
		For any \(n\in\mb{Z}^+,\,0\le k\le n\), the sequence
		\[
		\bigg(\binom{n}{j}\binom{n}{k-j}\bigg)_{j=0}^k
		\]
		is unimodal.
	\end{theorem}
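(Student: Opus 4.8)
The plan is to interpret each term of the sequence as a lattice-path count via Theorem~\ref{thm:Fab}, and then to compare consecutive terms using the reflection principle. First I would match $\binom{n}{j}\binom{n}{k-j}$ against the formula in Theorem~\ref{thm:Fab}: setting $(n+a-b)/2 = j$ and $(n-a-b)/2 = k-j$ forces $b = n-k$ and $a = 2j-k$, and one checks $a+b\equiv n\modulo{2}$ automatically, so $\binom{n}{j}\binom{n}{k-j} = |F_{2j-k,\,n-k}(n)|$. The definition of $F_{a,b}(n)$ makes sense for any integer endpoint $(a,b)$, and reflecting all paths across the $y$-axis is a cardinality-preserving bijection showing $|F_{a,b}(n)| = |F_{-a,b}(n)|$; this is consistent with the stated formula, which is invariant under $a\mapsto -a$, so I may use it for $a\le 0$ as well. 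Since $\binom{n}{j}\binom{n}{k-j}$ is unchanged under $j\mapsto k-j$, the sequence is palindromic with center $k/2$, and hence it suffices to prove $\binom{n}{j}\binom{n}{k-j} \le \binom{n}{j+1}\binom{n}{k-j-1}$ for every $j$ with $j < k/2$; the decreasing half then follows by symmetry.

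Fix such a $j$ and write $a = 2j-k$ and $b = n-k$, so that the desired inequality reads $|F_{a,b}(n)| \le |F_{a+2,b}(n)|$. Note that $j < k/2$ forces the integer $a = 2j-k$ to satisfy $a\le -1$. I would let $\ell$ be the vertical line $x = a+1$, whose reflection acts by $(x,y)\mapsto(2a+2-x,\,y)$; this line is $\mb{Z}^2$-invariant since $a\in\mb{Z}$, and it is the perpendicular bisector of the segment joining the endpoints $(a,b)$ and $(a+2,b)$. The crucial geometric point is that every path $\mbf{v}\in F_{a,b}(n)$ must meet $\ell$: it starts at $(0,0)$, whose $x$-coordinate is $0\ge a+1$ because $a\le -1$, and ends at $(a,b)$, whose $x$-coordinate is $a < a+1$, so some vertex lies on the line $x = a+1$. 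Thus $r(\ell,\mbf{v})$ is well defined.

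With this in place I would verify that $\mbf{v}\mapsto r(\ell,\mbf{v})$ is an injection $F_{a,b}(n)\hookrightarrow F_{a+2,b}(n)$. Reflection across $\ell$ fixes the start $(0,0)$, sends the endpoint $(a,b)$ to $(a+2,b)$, and maps the step set $\{(\pm1,0),(0,\pm1)\}$ to itself, so $r(\ell,\mbf{v})$ is indeed a length-$n$ path in $F_{a+2,b}(n)$. For injectivity I would observe that reflecting the tail after the last vertex on $\ell$ is an involution on the set of all length-$n$ lattice paths from $(0,0)$ that meet $\ell$: a vertex lies on $\ell$ if and only if its reflection does, so the index $k_0$ of the last visit to $\ell$ is preserved, and applying the operation twice restores $\mbf{v}$. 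An involution is a bijection, hence its restriction to $F_{a,b}(n)$ is injective, giving $|F_{a,b}(n)| \le |F_{a+2,b}(n)|$ and completing the argument. The main obstacle is the bookkeeping that makes the reflection principle rigorous: confirming that the identification with $F_{a,b}(n)$ survives the passage to non-positive $a$ (including the boundary cases $a=-1$ and $b=0$), and checking carefully that every relevant path meets $\ell$ so that the reflection is defined and the involution argument for injectivity goes through.
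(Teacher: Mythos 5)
Your proposal is correct and takes essentially the same approach the paper intends: the paper offers no detailed proof, stating only that the result is ``an easy consequence of Theorem~\ref{thm:Fab} and the reflection principle,'' and your argument---identifying $\binom{n}{j}\binom{n}{k-j}$ with $|F_{2j-k,\,n-k}(n)|$ and injecting $F_{a,b}(n)\hookrightarrow F_{a+2,b}(n)$ by reflecting tails across the vertical line $x=a+1$---is precisely that sketch, carried out in full. The boundary issues you flag (non-positive $a$, and $b=0$ when $k=n$) are genuine but harmless, since the DeTemple--Robertson count extends to all integer endpoints of the correct parity (e.g.\ via the rotation $(x,y)\mapsto(x+y,x-y)$ to two independent $\pm1$ walks), with the convention that out-of-range binomial coefficients vanish.
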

	Very interestingly, taking \(k=2j\), for some \(0\le j\le\lfloor n/2\rfloor\), the unimodality in Theorem~\ref{thm:sagan-unimodal} says
	\[
	\binom{n}{j-1}\binom{n}{j+1}\le\binom{n}{j}^2,
	\]
	which is exactly the assertion that the binomial coefficients are log-concave.

	\section{Unimodality of Gaussian polynomials}\label{sec:Gaussian}
	
	In this concluding section, we discuss the unimodality of Gaussian polynomials in more detail.  Recall from Subsection~\ref{subsec:Gaussian} that for any \(n\in\mb{N},\,0\le k\le n\), the Gaussian polynomial is defined as
	\[
	\sqbinom{n}{k}_X=\frac{[n]_X!}{[k]_X![n-k]_X!}.
	\]
	We introduce a change of notation.  For any \(a,b\in\mb{N}\), define
	\[
	\tsf{G}_{a,b}(X)=\sqbinom{a+b}{a}_X.
	\]
	We then have
	\[
	\tsf{G}_{a,b}(X)=\frac{\prod_{i=1}^{a+b}(X^i-1)}{\prod_{j=1}^a(X^j-1)\prod_{j=1}^b(X^j-1)}=\frac{\prod_{i=1}^a(X^i-1)\prod_{i=1}^b(X^{a+i}-1)}{\prod_{j=1}^a(X^j-1)\prod_{j=1}^b(X^j-1)}=\frac{\prod_{i=1}^b(X^{a+i}-1)}{\prod_{j=1}^b(X^j-1)}.
	\]
	Let \(f(X)=\prod_{i=1}^b(X^{a+i}-1)\) and \(g(X)=\prod_{j=1}^b(X^j-1)\).  Since the coefficients are all in \(\mb{Q}\), by the division algorithm, there exist \(h(X),r(X)\in\mb{Q}[X]\) such that \(f(X)=g(X)h(X)+r(X)\), where \(r(X)=0\) or \(\deg r<\deg g\).  If \(r(X)\ne0\) and \(\deg r<\deg g\), then there exists \(M>0\) such that for all \(x>M\), we have \(|r(x)/g(x)|<1\).  So we will have \(\tsf{G}_{a,b}(X)=h(X)+r(X)/g(X)\).  But \(\tsf{G}_{a,b}(x)\) is an integer whenever \(x\) is a power of a prime (see, for eg., Lidl and Niderreiter~\cite{lidl1997finite}).  This will give us a contradiction and then we can conclude that \(r(X)=0\), thus proving that \(\tsf{G}_{a,b}(X)\) is indeed a polynomial.  Now, it is well-known that the Gaussian polynomial is unimodal.
	\begin{theorem}[Sylvester~\cite{sylvester1878xxv}, Proctor~\cite{proctor1982solution}, White~\cite{white1980monotonicity}]\label{thm:gaussian-unimodal}
		For any \(a,b\in\mb{N}\), \(\tsf{G}_{a,b}(X)\) is unimodal.
	\end{theorem}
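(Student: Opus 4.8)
The plan is to recast the unimodality of $\tsf{G}_{a,b}(X)$ as a statement about a ranked poset and then attack it with the linear-algebraic ($\mf{sl}_2$) method underlying Proctor's proof. First I would record the combinatorial meaning of the coefficients. Writing $\tsf{G}_{a,b}(X)=\sum_{j=0}^{ab}c_jX^j$, the coefficient $c_j$ counts the integer partitions $\lambda$ whose Young diagram fits inside an $a\times b$ rectangle and has exactly $j$ cells. Ordered by containment of diagrams, these partitions form a ranked poset $L(a,b)$ with rank function $\lambda\mapsto|\lambda|$, top rank $N\coloneqq ab$, and $c_j=|\rho^{-1}(j)|$. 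From the product formula one checks directly that $X^N\tsf{G}_{a,b}(1/X)=\tsf{G}_{a,b}(X)$, so by the observation characterizing palindromic polynomials the sequence $(c_j)_{j=0}^N$ is palindromic with center $N/2$. Hence it suffices to prove $c_0\le c_1\le\cdots\le c_{\lfloor N/2\rfloor}$.

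Next I would introduce the vector space $V=\bigoplus_{j=0}^N V_j$ over $\mb{C}$, where $V_j$ has as basis the partitions of rank $j$ in $L(a,b)$, so that $\dim V_j=c_j$. The goal becomes producing, for each $j<N/2$, an injective linear map $V_j\to V_{j+1}$, since injectivity forces $c_j\le c_{j+1}$. Rather than build these maps one at a time, I would construct a single order-raising operator $U\colon V\to V$ carrying $V_j$ into $V_{j+1}$, a lowering operator $D$ carrying $V_{j+1}$ into $V_j$, and the diagonal operator $H$ acting on $V_j$ as the scalar $2j-N$, arranged so that the triple $(U,D,H)$ satisfies the $\mf{sl}_2$ relations $[H,U]=2U$, $[H,D]=-2D$, and $[U,D]=H$. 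Concretely $U$ sends a partition to a suitably weighted sum of the partitions covering it, and $D$ to a weighted sum of those it covers.

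With the triple in hand, finite-dimensional $\mf{sl}_2$-representation theory finishes the argument: $V$ decomposes into irreducibles, in each of which the weight spaces grow monotonically toward weight $0$ and the raising map is injective below the middle. Since $V_j$ is the $H$-eigenspace of weight $2j-N$, which is negative exactly when $j<N/2$, it follows that $U\colon V_j\to V_{j+1}$ is injective for every $j<N/2$. This yields $c_j\le c_{j+1}$ there, and together with the palindromicity established above this gives the unimodality of $\tsf{G}_{a,b}(X)$.

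The main obstacle is the middle step: assigning weights to the cover relations of $L(a,b)$ so that the commutator identity $[U,D]=H$ holds exactly. The relations $[H,U]=2U$ and $[H,D]=-2D$ are automatic from the grading, but $[U,D]=H$ is a genuine constraint coupling the up- and down-weights at every partition, and checking that a consistent choice exists (equivalently, that $U$ has the \emph{hard Lefschetz} property that $U^{N-2j}\colon V_j\to V_{N-j}$ is an isomorphism for $j\le N/2$) is precisely the content of Proctor's argument. I would isolate this as a lemma and carry out the weight bookkeeping on the cover relations of the box poset, after which the representation-theoretic conclusion is routine. I would also emphasize that this proof is non-constructive at the level of partitions: it produces injections $V_j\hookrightarrow V_{j+1}$ of vector spaces rather than an explicit matching of the partitions themselves, which is exactly why an injective combinatorial proof remains open.
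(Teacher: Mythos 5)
Your route is not the paper's. The paper proves Theorem~\ref{thm:gaussian-unimodal} by induction on \((a,b)\) via the KOH identity: every summand there is of the form \(X^{(\cdot)}\prod_i\tsf{G}_{a_i,b_i}(X)\) with smaller parameters, hence darga-palindromic and unimodal of darga \(ab\) by the induction hypothesis and Observation~\ref{obs:Gaussian-prop}~(b),(c), and a sum of such polynomials is unimodal by Observation~\ref{obs:Gaussian-prop}~(a). You instead take the Sylvester--Proctor linear-algebra route~\cite{sylvester1878xxv,proctor1982solution}. Your scaffolding for that route is correct: the coefficients do count partitions in an \(a\times b\) box, palindromicity reduces the theorem to \(c_j\le c_{j+1}\) for \(j<N/2\), the relations \([H,U]=2U\) and \([H,D]=-2D\) are indeed free from the grading, and the representation-theoretic endgame (injectivity of the raising operator on negative \(H\)-weight spaces) is standard and correctly stated.

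However, as written the proposal has a genuine gap, and you have located it yourself: the existence of weights on the cover relations of \(L(a,b)\) making \([U,D]=H\) hold exactly is not bookkeeping that can be deferred to a lemma --- it is the entire mathematical content of the theorem in this approach. It cannot come from general principles: a ranked poset admitting such an \(\mf{sl}_2\) structure is automatically rank-unimodal (and Sperner), while palindromicity of the rank sizes alone implies nothing (the rank sizes \(2,1,2\) are palindromic and not unimodal), so any argument must use the specific structure of \(L(a,b)\), which is exactly where your proposal stops. The clean way to close the gap is not to solve for the weights by hand but to get the relations by functoriality: let \(W\) be the \((a+b)\)-dimensional irreducible \(\mf{sl}_2\)-module with weight basis \(e_1,\ldots,e_{a+b}\), where \(e_i\) has weight \(a+b+1-2i\), and set \(V=\Lambda^aW\). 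The wedges \(e_{i_1}\wedge\cdots\wedge e_{i_a}\) with \(i_1<\cdots<i_a\) biject with partitions \(\lambda\) in the \(a\times b\) box via \(\lambda_j=b+j-i_j\); a short computation shows such a wedge has \(h\)-weight \(2|\lambda|-ab\); and the raising operator sends a wedge to a combination, with nonzero coefficients, of wedges obtained by adding a single box. Since \(\Lambda^aW\) is an \(\mf{sl}_2\)-module, the commutation relations hold automatically, and your endgame then applies verbatim. With that lemma inserted, your proof is complete and genuinely different from the paper's: it is shorter if one grants \(\mf{sl}_2\) theory, but --- as you correctly note --- it is non-constructive at the level of partitions, which is precisely why the paper's concluding subsection still asks for explicit injections \(U_k(a,b)\hookrightarrow U_{k+1}(a,b)\).
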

	However, the proofs of Sylvester~\cite{sylvester1878xxv}, Proctor~\cite{proctor1982solution} and White~\cite{white1980monotonicity} are not \emph{purely combinatorial}.  Finding such a combinatorial proof of Theorem~\ref{thm:gaussian-unimodal} was open for a long time, with O'Hara~\cite{o1990unimodality} finally giving a beautiful combinatorial argument.  A key ingredient of this argument was a structure theorem for a nice poset.  Zeilberger~\cite{zeilberger1989ohara} gave an elementary exposition of this proof and further went on to give an \emph{algebraic version} of the structure theorem in \cite{zeilberger1989one}, called the KOH identity.
	
	We will now look at an outline of this combinatorial argument of the unimodality of the Gaussian polynomial.  If \(a=0\) or \(b=0\), then the unimodality of \(\tsf{G}_{a,b}(X)\) is trivial.  So let us assume that \(a,b\ge 1\).  Define
	\[
	U(a,b)=\{\lambda=(\lambda_1,\ldots,\lambda_a)\in\mb{N}^a:b\ge\lambda_1\ge\cdots\ge\lambda_a\ge0\},
	\]
	and for \(k\in\{0,\ldots,ab\}\), define
	\[
	U_k(a,b)=\bigg\{\lambda\in U(a,b):\sum_{i=1}^a\lambda_i=k\bigg\}.
	\]
	The following are useful observations.
	\begin{observation}[see Zeilberger~\cite{zeilberger1989ohara}]
		For any \(a,b\ge 1\),
		\begin{enumerate}[(a)]
			\item  \(\deg \tsf{G}_{a,b}=ab\).
			\item  \(\tsf{G}_{a,b}(X)=\sum_{k=0}^{ab}c_k(a,b)X^k\), where \(c_k(a,b)\coloneqq|U_k(a,b)|,\,0\le k\le ab\).
			\item  \(\tsf{G}_{a,b}(X)\) is palindromic, that is, \(c_k(a,b)=c_{ab-k}(a,b)\), for \(0\le k\le ab\).
		\end{enumerate}
	\end{observation}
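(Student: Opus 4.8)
The three assertions are best handled by first establishing (b), from which (a) and (c) follow quickly. The plan for (b) is to show that the generating function $P_{a,b}(X) \coloneqq \sum_{\lambda \in U(a,b)} X^{|\lambda|}$, where $|\lambda| = \sum_{i=1}^a \lambda_i$, coincides with $\tsf{G}_{a,b}(X) = \sum_k c_k(a,b)X^k$ (note $P_{a,b} = \sum_k c_k(a,b)X^k$ by the very definition $c_k(a,b) = |U_k(a,b)|$), by checking that both satisfy the same recurrence with the same base cases. From the product formula already derived, one verifies by routine algebra the $q$-analogue of Pascal's rule,
\[
\tsf{G}_{a,b}(X) = \tsf{G}_{a-1,b}(X) + X^a\,\tsf{G}_{a,b-1}(X),
\]
together with the boundary values $\tsf{G}_{0,b}(X) = \tsf{G}_{a,0}(X) = 1$. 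On the combinatorial side, I would split $U(a,b)$ according to whether its smallest coordinate $\lambda_a$ vanishes: the partitions with $\lambda_a = 0$ are in weight-preserving bijection with $U(a-1,b)$ (simply forget the last coordinate), while those with $\lambda_a \ge 1$ biject with $U(a,b-1)$ by subtracting $1$ from every part, an operation lowering the weight by exactly $a$. This yields $P_{a,b}(X) = P_{a-1,b}(X) + X^a P_{a,b-1}(X)$, and since clearly $P_{0,b}(X) = P_{a,0}(X) = 1$ (only the empty partition occurs), induction on $a+b$ gives $P_{a,b} = \tsf{G}_{a,b}$, which is exactly (b).

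Given (b), part (a) is immediate: the degree of $\tsf{G}_{a,b}$ equals the largest value of $|\lambda|$ attained on $U(a,b)$, and this maximum is $ab$, realised uniquely by the ``full box'' partition $\lambda = (b,b,\ldots,b)$; in particular $c_{ab}(a,b) = 1$. Alternatively, one can read off $\deg \tsf{G}_{a,b} = \sum_{i=1}^b (a+i) - \sum_{j=1}^b j = ab$ directly from the product formula. For part (c), I would exhibit the complementation involution $\lambda \mapsto \lambda^{c}$ on $U(a,b)$ defined by $\lambda^{c}_i = b - \lambda_{a+1-i}$, which amounts to replacing a partition's Young diagram inside the $a \times b$ box by its complement. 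This map is well defined and self-inverse on $U(a,b)$, and it sends a partition of weight $k$ to one of weight $ab - k$; hence it restricts to a bijection $U_k(a,b) \to U_{ab-k}(a,b)$, giving $c_k(a,b) = c_{ab-k}(a,b)$ as required.

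The genuinely substantive step is (b): everything else is bookkeeping once the coefficients of $\tsf{G}_{a,b}$ have been identified as the box-partition counts $c_k(a,b)$. The only points demanding care are verifying the algebraic recurrence for $\tsf{G}_{a,b}$ from the product formula and checking that the two maps used to split $U(a,b)$ are genuine weight-respecting bijections onto the correct smaller sets; neither is difficult, but the weight shift of $a$ in the second bijection must be matched precisely against the factor $X^a$ in the recurrence.
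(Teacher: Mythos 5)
Your proof is correct, and it is worth noting that the paper itself gives no proof of this Observation at all --- it is stated with a pointer to Zeilberger's exposition --- so your argument supplies what the paper omits. Each step checks out: the $q$-Pascal rule $\tsf{G}_{a,b}(X)=\tsf{G}_{a-1,b}(X)+X^a\,\tsf{G}_{a,b-1}(X)$ does follow from the product formula by the one-line computation
\[
\frac{X^a-1}{X^b-1}+X^a=\frac{X^{a+b}-1}{X^b-1},
\]
your two splittings of $U(a,b)$ (according to $\lambda_a=0$ or $\lambda_a\ge1$) are genuine weight-respecting bijections onto $U(a-1,b)$ and $U(a,b-1)$ with the weight shift $a$ matching the factor $X^a$, and the box-complementation $\lambda^c_i=b-\lambda_{a+1-i}$ is a well-defined involution carrying $U_k(a,b)$ onto $U_{ab-k}(a,b)$, which gives (c). This is the classical route to these facts (essentially the one found in Zeilberger's and standard references), and it has a pleasant side effect you could make explicit: since your induction identifies $\tsf{G}_{a,b}$ with the polynomial $P_{a,b}$ as rational functions, it re-proves that $\tsf{G}_{a,b}(X)$ is a polynomial with nonnegative integer coefficients, making the paper's earlier ad hoc argument via integrality at prime powers unnecessary. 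One tiny presentational point: in part (a), when you pass from (b) to the degree, you should say explicitly that $c_{ab}(a,b)=1\ne0$ because the full-box partition $(b,\ldots,b)$ is the unique element of $U_{ab}(a,b)$; you do say this, so nothing is missing --- just keep it, since the degree claim needs the top coefficient to be nonzero, not merely that $ab$ is the largest possible weight.
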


	\subsection{O'Hara's proof of unimodality of Gaussian polynomials}
	
	The KOH identity~\cite{zeilberger1989one}, which is an algebraic version of O'Hara's structure theorem~\cite{o1990unimodality}, is as follows.
	\begin{theorem}[KOH Identity, Zeilberger~\cite{zeilberger1989one}]
		For any \(a,b\ge 1\),
		\[
		\tsf{G}_{a,b}(X)=\sum_{(d_1,\ldots,d_b):\sum_{i=1}^bid_i=b}\bigg(X^{b\big(\sum_{i=1}^bd_i\big)-b-\sum_{1\le i<j\le b}(j-i)d_id_j}\prod_{i=0}^{b-1}\tsf{G}_{a_i,b_i}(X)\bigg),
		\]
		where for \(0\le i\le b-1\), we have \(a_i=(b-i)b-2i+\sum_{j=0}^{i-1}2(i-j)d_{b-j}\) and \(b_i=d_{b-i}\).
	\end{theorem}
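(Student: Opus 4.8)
The plan is to prove the KOH identity combinatorially, reproducing the content of O'Hara's structure theorem in generating-function form (this is essentially Zeilberger's exposition). The starting point is the interpretation recorded in the observation above: $\tsf{G}_{a,b}(X)=\sum_{\lambda\in U(a,b)}X^{|\lambda|}$ is the area generating function of all Young diagrams $\lambda$ fitting inside the $a\times b$ rectangle. A tuple $(d_1,\ldots,d_b)$ with $\sum_i id_i=b$ is exactly the multiplicity encoding of a partition $\mu\vdash b$ having $d_i$ parts equal to $i$, so the right-hand side is a sum over partitions $\mu$ of $b$. For each such $\mu$, expanding the product gives
\[
X^{E(\mu)}\prod_{i=0}^{b-1}\tsf{G}_{a_i,b_i}(X)=\sum_{(\nu^{(0)},\ldots,\nu^{(b-1)})\in\prod_i U(a_i,b_i)}X^{\,E(\mu)+\sum_i|\nu^{(i)}|},
\]
where $E(\mu)=b\big(\sum_i d_i\big)-b-\sum_{i<j}(j-i)d_id_j$. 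Thus it suffices to produce an area-respecting bijection
\[
U(a,b)\;\longleftrightarrow\;\bigsqcup_{\mu\,\vdash\,b}\;\prod_{i=0}^{b-1}U(a_i,b_i),
\]
under which a diagram $\lambda$ assigned to the block of type $\mu$ and decomposing as $(\nu^{(0)},\ldots,\nu^{(b-1)})$ satisfies $|\lambda|=E(\mu)+\sum_i|\nu^{(i)}|$.

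Second, I would construct this bijection explicitly. The idea is that each $\lambda\in U(a,b)$ admits a canonical decomposition: one extracts from $\lambda$ a \emph{skeleton} consisting of a nested family of rectangular blocks, whose widths read with multiplicity record a partition $\mu\vdash b$, and then records the residual freedom left after fixing this skeleton as a tuple $(\nu^{(i)})_{i=0}^{b-1}$. The combinatorial constraints on the $i$-th residual region—its available height and width inside the box—are precisely what pin it to the rectangle of dimensions $a_i\times b_i$ with $b_i=d_{b-i}$, forcing $\nu^{(i)}\in U(a_i,b_i)$. I would then exhibit the inverse assembly map, reconstructing $\lambda$ from $\mu$ together with the residual diagrams $\nu^{(i)}$ by inserting each into its designated region of the skeleton, and check that the decomposition and assembly maps are mutually inverse. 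This mutual inversion is exactly the assertion of O'Hara's structure theorem, and giving a genuinely precise and verifiable definition of the canonical decomposition is the delicate heart of the argument.

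Finally, I would verify the weight statement. The exponent $E(\mu)$ should equal the area of the bare skeleton of type $\mu$ (all residuals empty), and since the residual blocks occupy disjoint regions each residual contributes its area additively, giving $|\lambda|=E(\mu)+\sum_i|\nu^{(i)}|$; summing $X^{|\lambda|}$ over all $\lambda$ of a fixed type then factors as $X^{E(\mu)}\prod_i\tsf{G}_{a_i,b_i}(X)$, and summing over $\mu\vdash b$ yields the identity. I expect the main obstacle to be twofold: first, making the decomposition and its inverse explicit enough that bijectivity is transparent; and second, the exact bookkeeping that the skeleton area is $b(\sum_id_i)-b-\sum_{i<j}(j-i)d_id_j$ and the box dimensions are $a_i=(b-i)b-2i+\sum_{j=0}^{i-1}2(i-j)d_{b-j}$, $b_i=d_{b-i}$. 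The cross term $\sum_{i<j}(j-i)d_id_j$ encodes the horizontal offsets between blocks of differing widths, and keeping track of these offsets correctly is the most error-prone part. (One could instead attempt a direct induction from the $q$-Pascal recurrence $\tsf{G}_{a,b}(X)=\tsf{G}_{a-1,b}(X)+X^a\tsf{G}_{a,b-1}(X)$, but since lowering $b$ reindexes the entire sum over $\mu\vdash b$, matching the recurrence to the structure of the right-hand side is at least as intricate as the bijective route.)
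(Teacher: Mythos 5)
You have set up the combinatorial reading correctly and reduced the identity to the right statement: since \(\tsf{G}_{a,b}(X)=\sum_{\lambda\in U(a,b)}X^{|\lambda|}\) and the tuples \((d_1,\ldots,d_b)\) with \(\sum_i id_i=b\) encode partitions \(\mu\vdash b\), the theorem is equivalent to an area-respecting bijection between \(U(a,b)\) and \(\bigsqcup_{\mu\vdash b}\prod_{i=0}^{b-1}U(a_i,b_i)\) with weight shift \(E(\mu)\). But this reduction is just a restatement of the theorem: the bijection you then promise \emph{is} O'Hara's structure theorem, and your proposal never constructs it. The ``skeleton of nested rectangular blocks'' and the ``residual regions'' are described only at the level of intent; there is no definition of the decomposition map, no argument that the \(i\)-th residual piece lies in \(U(a_i,b_i)\), no inverse map, and no verification of the exponent \(E(\mu)\) or of the formula for \(a_i\). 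You yourself flag these as ``the delicate heart'' and ``the most error-prone part'' of the argument --- exactly so: they are where all of the mathematical content lies (this construction is the bulk of O'Hara's paper~\cite{o1990unimodality} and of Zeilberger's exposition~\cite{zeilberger1989ohara}), so what you have is a plan for a proof, not a proof. For the record, the paper does not prove this theorem either --- it quotes the KOH identity from~\cite{zeilberger1989one} and uses it as a black box in the induction establishing Theorem~\ref{thm:gaussian-unimodal} --- so your outline is consistent with the route taken in the cited literature; it is simply not carried out.

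There is also a concrete error you would have discovered had you done the bookkeeping you defer: as printed, \(a_i=(b-i)b-2i+\sum_{j=0}^{i-1}2(i-j)d_{b-j}\) does not involve \(a\) at all, so the right-hand side of the stated identity is independent of \(a\) while the left-hand side is not; the statement as written cannot be correct. The intended formula is \(a_i=(b-i)a-2i+\sum_{j=0}^{i-1}2(i-j)d_{b-j}\), which is what the paper's proof of Theorem~\ref{thm:gaussian-unimodal} actually uses (there the term for \((d_1,\ldots,d_b)=(b,0,\ldots,0)\) is asserted to be \(X^{b(b-1)}\tsf{G}_{a-2(b-1),b}(X)\), and this requires the factor \(a\), not \(b\)). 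As a sanity check, for \((a,b)=(2,2)\) the corrected identity reads \(\tsf{G}_{4,1}(X)+X^2\tsf{G}_{4,0}(X)\tsf{G}_{0,2}(X)=1+X+2X^2+X^3+X^4=\tsf{G}_{2,2}(X)\); and for \((a,b)=(1,2)\) one of the terms is \(X^2\tsf{G}_{-1,2}(X)\), showing that any complete proof must also fix the convention \(\tsf{G}_{a',b'}(X)=0\) for \(a'<0\), \(b'>0\) --- a point your bijective plan would need to address, since the sets \(U(a_i,b_i)\) are undefined for negative \(a_i\). In short: the approach you outline is the right one and matches the literature, but the two deferred steps (the explicit decomposition and the exponent/dimension bookkeeping) constitute essentially the whole theorem, and engaging with the second would have been necessary even to state correctly what is being proved.
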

	
	For any polynomial \(f(X)=\sum_{k=i}^ja_kX^k\) with \(a_i\ne0,\,a_j\ne0\), define the \tsf{darga} of \(f(X)\) as \(\darga(f)=i+j\).  Further, we say \(f(X)\) is \tsf{darga-palindromic} if \(a_k=a_{j-k}\), for \(i\le k\le j\).  The following observations follow immediately from the definition.
	\begin{observation}[Zeilberger~\cite{zeilberger1989one}]\label{obs:Gaussian-prop}
		\begin{enumerate}[(a)]
			\item  If \(f(X),g(X)\) are darga-palindromic and unimodal with \(\darga(f)=\darga(g)=m\), then \(f(X)+g(X)\) is darga-palindromic and unimodal with \(\darga(f+g)=m\).
			\item  If \(f(X),g(X)\) are darga-palindromic and unimodal with \(\darga(f)=m,\,\darga(g)=n\), then \(f(X)g(X)\) is darga-palindromic and unimodal with \(\darga(fg)=m+n\).
			\item  If \(f(X)\) is darga-palindromic and unimodal with \(\darga(f)=m\), then for any \(n\in\mb{N}\), \(X^nf(X)\) is darga-palindromic and unimodal with \(\darga(X^nf(X))=n+m\).
		\end{enumerate}
	\end{observation}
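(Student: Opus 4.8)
The plan is to derive all three parts from a single structural picture: a darga-palindromic unimodal polynomial with nonnegative coefficients has a coefficient sequence that is symmetric about, and nondecreasing up to, its center $\darga/2$. I take nonnegative coefficients as a standing hypothesis — it holds for every polynomial produced by the KOH identity and is what keeps the extreme terms from cancelling. A convenient reformulation, extending the earlier reciprocal characterization of palindromic polynomials, is that $f$ with $\darga(f)=m$ is darga-palindromic if and only if $X^m f(1/X)=f(X)$, proved by the same coefficient comparison. I will first isolate an \emph{interval decomposition}: with $\Lambda_{m,\ell}(X)=X^\ell+X^{\ell+1}+\cdots+X^{m-\ell}$, every darga-palindromic unimodal $f=\sum_k a_k X^k$ of darga $m$ equals $\sum_\ell (a_\ell-a_{\ell-1})\Lambda_{m,\ell}$, taken over $\ell$ from the bottom of its support up to $\lfloor m/2\rfloor$ (with the convention $a_{\ell-1}=0$ at the bottom), the coefficients being nonnegative exactly because palindromy together with unimodality forces $f$ to be nondecreasing up to its center. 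Each $\Lambda_{m,\ell}$ is darga-palindromic and, being constant on its support, unimodal, with $\darga(\Lambda_{m,\ell})=m$.

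Parts (a) and (c) are then short. For (a), equal dargas mean both sequences are symmetric about $m/2$ and nondecreasing up to $m/2$; hence their sum is too, which is unimodality, and symmetry of the sum is darga-palindromy, while nonnegativity keeps the extreme terms alive so that $\darga(f+g)=m$. This is the palindromic incarnation of Observation~\ref{obs:unimodal-prop}(b), the common center playing the role of the common mode. For (c), multiplication by $X^n$ merely relabels the coefficient at degree $k$ as sitting at degree $k+n$, carrying a symmetric unimodal sequence to a symmetric unimodal one; since the lowest and highest exponents both rise by $n$, the darga grows by $2n$. This elementary monomial-shift fact is exactly what I will use to absorb the leading monomials appearing in (b).

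Part (b) is the crux. Its palindromic half is painless: the reciprocal characterization gives $X^{m+n}(fg)(1/X)=\bigl(X^m f(1/X)\bigr)\bigl(X^n g(1/X)\bigr)=f(X)g(X)$, and since lowest (resp.\ highest) exponents add, nonnegativity yields $\darga(fg)=m+n$. The unimodality of the product is where I expect the real obstacle, since products of unimodal sequences are in general \emph{not} unimodal — the palindromic structure must be used essentially. The plan is to apply the interval decomposition to both factors and expand $fg$ bilinearly into a nonnegative combination of products $\Lambda_{m,\ell}\Lambda_{n,\ell'}$. Each such product factors as $X^{\ell+\ell'}[p]_X[q]_X$ with $p=m-2\ell+1$ and $q=n-2\ell'+1$, so everything reduces to the single transparent fact that $[p]_X[q]_X$, being the convolution of two constant sequences, has the trapezoidal coefficient sequence $1,2,\ldots,\min(p,q),\ldots,\min(p,q),\ldots,2,1$, which is symmetric and unimodal. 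By the monomial-shift fact in (c) the factor $X^{\ell+\ell'}$ preserves these properties and places every $\Lambda_{m,\ell}\Lambda_{n,\ell'}$ in darga $m+n$; by (a), iterated, their nonnegative combination $fg$ is darga-palindromic and unimodal of darga $m+n$. The only steps needing care are confirming the trapezoidal shape of $[p]_X[q]_X$ and checking that all the interval products share the common darga $m+n$ so that (a) applies uniformly — both routine once the decomposition is set up.
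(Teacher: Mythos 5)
Your argument is correct, but note that the paper itself offers no proof to compare against: it asserts that all three parts ``follow immediately from the definition.''  What you have written out --- decomposing a darga-palindromic unimodal polynomial with nonnegative coefficients into a nonnegative combination of the interval polynomials \(\Lambda_{m,\ell}=X^{\ell}+X^{\ell+1}+\cdots+X^{m-\ell}\), and then observing that \(\Lambda_{m,\ell}\Lambda_{n,\ell'}=X^{\ell+\ell'}[p]_X[q]_X\) has a trapezoidal coefficient sequence and darga exactly \(m+n\) --- is the classical argument behind Zeilberger's statement, and it is the right way to fill the gap, since part (b) is genuinely not immediate: as you say, products of unimodal polynomials need not be unimodal, so the symmetry must be used.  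Your standing nonnegativity hypothesis is also necessary, not merely convenient: \(f=1+X+X^2\) and \(g=-X\) are both darga-palindromic and unimodal of darga \(2\), yet \(f+g=1+X^2\) is not unimodal, so (a) is false without it.  (The hypothesis holds for every polynomial to which the paper applies this observation, so nothing downstream is affected.)

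One discrepancy must be flagged, and it is a defect of the printed statement rather than of your proof: in (c) you obtain \(\darga(X^nf(X))=m+2n\), while the Observation claims \(n+m\).  Your value is the correct one under the paper's definition of darga as the sum of the lowest and highest exponents of the support (already \(f=1\), \(n=1\) gives \(\darga(X)=2\neq1\)).  It is also the rule the paper actually uses in its proof of Theorem~\ref{thm:gaussian-unimodal}, where \(X^{b(b-1)}\tsf{G}_{a-2(b-1),b}\) is asserted to have darga \((a-2(b-1))b+2b(b-1)=ab\); with the printed rule ``\(n+m\)'' that computation would instead give \(ab-b(b-1)\neq ab\).  So the statement should read \(\darga(X^nf(X))=2n+m\), and your proof establishes exactly this corrected version; your darga bookkeeping in part (b), namely \((p-1)+(q-1)+2(\ell+\ell')=m+n\) with \(p=m-2\ell+1\) and \(q=n-2\ell'+1\), consistently uses the correct rule as well.
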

	We now note that \(\tsf{G}_{a,b}(X)\) is, in fact, darga-palindromic with \(\darga(\tsf{G}_{a,b})=ab\).  We are now ready to prove Theorem~\ref{thm:gaussian-unimodal}.
	
	\begin{proof}[Proof of Theorem~\ref{thm:gaussian-unimodal}]
		As noted before, if \(a=0\) or \(b=0\), the result is trivially true.  So now by induction, assume that \(\tsf{G}_{a',b'}(X)\) is unimodal for all pairs \((a',b')\) such that \(a'<a\) or \(b'<b\); we also know that \(\darga(\tsf{G}_{a',b'})=a'b'\).  In the expression of KOH identity for \(\tsf{G}_{a,b}(X)\), for \((d_1,\ldots,d_b)=(b,0,\ldots,0)\), the RHS is equal to \(X^{b(b-1)}\tsf{G}_{a-2(b-1),b}\), which is unimodal with darga \(ab\), by the induction hypothesis and Observations~\ref{obs:Gaussian-prop} (b),(c).  For every other \((d_1,\ldots,d_b)\ne(b,0,\ldots,0)\), again the RHS can be checked to be unimodal with darga \(ab\), using the induction hypothesis and Observations~\ref{obs:Gaussian-prop} (b),(c).  Then we conclude the result using Observation~\ref{obs:Gaussian-prop} (a).
	\end{proof}
	
	\subsection{Conclusion: some attempts to get an \emph{injective proof}}
	
	It is evident that the most desirable, purely combinatorial proof of unimodality of \(\tsf{G}_{a,b}(X)\) would be an injective argument, that gives an explicit injection \(U_k(a,b)\hookrightarrow U_{k+1}(a,b)\), for all \(k<\lfloor ab/2\rfloor\).  It is still an open question to find such injections (see Zeilberger~\cite{zeilberger1989ohara}).  We conclude by listing a few attempts at obtaining such injections and observe the bottlenecks that arise.  Fix \(a,b\ge1\).
	\begin{enumerate}[(1)]
		\item  Fix any \(k<\lfloor n/2\rfloor\).  For any \(\lambda\in U_k(a,b)\), let \(j\in\{1,\ldots,a\}\) be the largest such that \(\lambda_j=b\), if there is one, otherwise set \(j=0\).  Map \(\lambda\) to \(\lambda'\), where \(\lambda'=(\lambda_1,\ldots,\lambda_j,\lambda_{j+1}+1,\lambda_{j+2},\ldots,\lambda_a)\in U_{k+1}(a,b)\).
		
		This is essentially a \emph{column-wise filling} and we see that injectivity is lost for \(k=2b-2\), for \(\lambda=(b,b-2,0,\ldots,0),\,\delta=(b-1,b-1,0,\ldots,0)\).
		\item  Fix any \(k<\lfloor n/2\rfloor\).  We now attempt a \emph{row-wise filling} which is easily described using a \emph{transpose operation}.  First we note that we can identify every element of \(U(a,b)\) with a \(b\times a\) \(\{0,1\}\)-matrix, as follows.  For any \(\lambda\in U(a,b)\), define the matrix \(M_\lambda^{(a,b)}\) as
		\[
		(M_\lambda^{(a,b)})_{i,j}=\begin{cases}
		1,&j\le\lambda_i\\
		0,&j>\lambda_i
		\end{cases}
		\]
		We can then check that for any \(\lambda\in U(a,b)\), there exists unique \(\theta\in U(b,a)\) such that \((M_\lambda^{(a,b)})^T=M_\theta^{(b,a)}\), and conversely (that is, this is a one-to-one correspondence).  Now consider any \(\lambda\in U_k(a,b)\).  Let \(\theta\in U_k(b,a)\) be obtained from \(\lambda\) via the one-to-one correspondence described above.  Let \(\theta'\in U_{k+1}(b,a)\) be defined as in (1) and let \(\lambda'\in U_{k+1}(a,b)\) be obtained from \(\theta'\) via the one-to-one correspondence described above.  Map \(\lambda\) to \(\lambda'\).
		
		Again here, injectivity is lost for \(k=2a-2\), for \(\lambda,\delta\) defined by \(\lambda'=(a,a-2,0,\ldots,0),\,\delta'=(a-1,a-1,0,\ldots,0)\).
		\item  For any \(\lambda\in U(a,b)\), define
		\[
		n_\lambda=\sum_{i=1}^a\lambda_i(b+1)^{a-i}.
		\]
		Consider any \(\lambda\in U_k(a,b)\).  Let \(\lambda'\in U_{k+1}(a,b)\) be defined by the conditions
		\[
		\lambda_i\le\lambda'_i,\,\forall\,i\in[a]\quad\tx{and}\quad n_{\lambda'}=\min\{n_\tau:\tau\in U_{k+1}(a,b)\tx{ and }\lambda_i\le\tau_i,\,\forall\,i\in[a]\}
		\]
		Here injectivity is lost for \(k=b\), for  \(\lambda=(b,0,0,\ldots,0),\,\delta=(b-1,1,0,\ldots,0)\).
		\item  For any \(\lambda\in U(a,b)\), define
		\[
		\wt(\lambda)=\max\{i\lambda_i:i\in[a]\}.
		\]
		Consider any \(\lambda\in U_k(a,b)\).  Let \(\lambda'\in U_{k+1}(a,b)\) be defined by the conditions
		\[
		\lambda_i\le\lambda'_i,\,\forall\,i\in[a]\quad\tx{and}\quad \wt(\lambda')=\max\{\wt(\tau):\tau\in U_{k+1}(a,b)\tx{ and }\lambda_i\le\tau_i,\,\forall\,i\in[a]\}
		\]
		In this case, the rule is not well-defined for \(k=1\), for
		\(\lambda=(1,0,\ldots,0\).
	\end{enumerate}

	\paragraph{Acknowledgements.}  The author would like to thank S. Venkitesh, Department of Mathematics, IIT Bombay, for several useful discussions on the topic and valuable comments in improving the presentation of this article.

	\bibliographystyle{alpha}
	\bibliography{references}
\end{document}